\newtheorem{theorem}{Theorem}[section]
\newtheorem{lemma}[theorem]{Lemma}
\newtheorem{definition}[theorem]{Definition}
\newtheorem{proposition}[theorem]{Proposition}
\newtheorem{example}[theorem]{Example}
\newtheorem{cor}[theorem]{Corollary}
\def\<{\langle}
\def\>{\rangle}
\def\a{\alpha}
\def\b{\beta}
\def\c{\cdot}
\date{}
\begin{document}
\renewcommand{\baselinestretch}{1.2}
\renewcommand{\arraystretch}{1.0}
\title{\bf Deformation cohomology of morphisms of Lie-Yamaguti algebras}
\author{{\bf Bibhash Mondal$^{1}$,     Ripan Saha$^{2}$\footnote
        { Corresponding author (Ripan Saha),  Email: ripanjumaths@gmail.com}}\\
  {\small 1. Department of Mathematics, Behala College}\\
  {\small Behala, 700060, Kolkata, India}\\  
  {\small Email: mondaliiser@gmail.com}\\
 {\small 2. Department of Mathematics, Raiganj University} \\
{\small  Raiganj 733134, West Bengal, India}}
 \maketitle
\begin{center}
\begin{minipage}{13.cm}

{\bf \begin{center} ABSTRACT \end{center}}
We study cohomology of morphisms of Lie-Yamaguti algebras. As an application, we establish that this cohomology `controls' the formal deformations. Additionally, we demonstrate its connection to the abelian extension of morphisms of Lie-Yamaguti algebras.
 \medskip

{\bf Key words}: Lie-Yamaguti algebra, morphism, cohomohology, deformation, rigidity. 
 \smallskip

 {\bf 2020 MSC:} 17A30, 17A40, 17B56, 17D99.
 \end{minipage}
 \end{center}
 \normalsize\vskip0.5cm
 
 \section{Introduction}
Lie-Yamaguti algebras, a pivotal concept in non-associative algebra theory, find their roots in the pioneering work of Jacobson \cite{NJ}, who formally introduced the notion of a Lie triple system. This algebraic structure emerged from the context of quantum mechanics \cite{Duffin}, demonstrating its profound relevance in this domain. Nomizu \cite{KN} further extended this theory, establishing a compelling connection between affine connections with parallel torsion and curvature, and invariant connections on reductive homogeneous spaces. Notably, each such space possesses a canonical connection that aligns geodesic parallel translation with the natural group action.

Building on these foundational ideas, K. Yamaguti \cite{KY} introduced the concept of a general Lie triple system: a vector space equipped with both bilinear and trilinear operations, satisfying intriguing relations. This innovation was crucial in characterizing the torsion and curvature tensors associated with Nomizu's canonical connection. The subsequent work of M. Kikkawa \cite{MK75} added a crucial insight, recognizing the intimate relationship between this problem, the canonical connection, and the general Lie triple system defined on the tangent space.

In a transformative contribution, Kikkawa renamed the notion of a general Lie system as a Lie triple algebra, establishing a nomenclature that resonates within the mathematical community. Kinyon and Weinstein \cite{KW} expanded this framework, identifying that Lie triple algebras, which they later termed Lie-Yamaguti algebras, could be constructed from Leibniz algebras. This insight demonstrated the broad applicability of Lie-Yamaguti algebras, which arose initially from Cartan's investigations into Riemannian geometry.

In \cite{ZT}, the author studied infinitesimal deformations of a Lie triple system using the cohomology groups as introduced by K. Yamaguti in \cite{KY-cohomology}. In \cite{ZL}, the authors studied $(2, 3)$-cohomology groups and related deformation theory of Lie-Yamaguti algebras. In \cite{LCM}, the authors discussed the one-paramater formal deformation theory of Lie-Yamaguti algebras, and recently, in \cite{Goswami} the author studied correct obstruction theory to integrate a finite order deformation to a full-blown deformation.

In 1983, Gerstenhaber and Schack \cite{GS83} initiated the study of deformation of morphism of associative algebras. In \cite{ashish}, the author studied deformations of Leibniz algebra morphisms. Recently, Das \cite{das} studied cohomology and deformation theory of Lie algebra morphisms. As Lie-Yamaguti algebra is a generalization (see Example \ref{Lie to LYA} and \ref{Leib to LYA}) of both Lie and Leibniz algebra \cite{Loday1}, it is very natural to study morphism of  Lie-Yamaguti algebra from cohomological point of view. The aim of this paper is to study an algebraic deformation theory of Lie-Yamaguti algebra morphisms. We study the deformation theory of morphism of Lie-Yamaguti algebra following the idea of Gerstenhaber's \cite{G1, G2, G3, G4, G5} algebraic deformation theory. We define a suitable deformation cohomology for such formal deformation. We also show that isomorphism classes of abelian extension of morphism of Lie-Yamaguti algebra has one to one correspondence with the ‘simple’ $(2,3)$- cohomology group.

The paper is organised as follows: In Section \ref{sec2}, we recall the definition and cohomology of Lie-Yamaguti algebras. In Section \ref{sec3}, we discuss representation of morphisms of Lie-Yamaguti algebras. In Section \ref{sec4}, we introduce cohomology of morphism of Lie-Yamaguti algebra which involves both Lie-Yamaguti part and the morphism part. In Section \ref{sec5}, we discuss one-parameter formal deformation theory of morphism of Lie-Yamaguti algebra and show that the deformation is `controlled' by our cohomology defined in Section \ref{sec4}. In Section \ref{sec6}, we study abelian extension of morphism of Lie-Yamaguti algebra and its relationship with the cohomology. Finally, in Section \ref{sec7}, we conclude our paper by pointing out that the work of this paper is a special case of a more general problem of diagram of Lie-Yamaguti algebras.

\section{Lie-Yamaguti algebra and its cohomology} \label{sec2}
In this section, we recall the cohomology of Lie-Yamaguti algebra as in \cite{KY, KY-LTS, KY-cohomology}, and \cite{Goswami}.
Let $(L, [~, ~], \{~, ~, ~\})$ be a Lie-Yamaguti algebra with representation $V$.
\begin{definition}
A Lie-Yamaguti  algebra  is a triple $(L, [~, ~], \{~, ~, ~\})$, where $L$ is a $k$-vector space, $[~, ~]$ is a binary operation and $\{~, ~, ~\}$ is 
a ternary operation on $L$ such that
\begin{eqnarray*}
&&(1)~~ [x, y]=-[y,  x],\\
&&(2) ~~\{x,y,z\}=-\{y,x,z\},\\
&&(3) ~~\circlearrowleft_{(x,y,z)}([[x, y], z]+\{x, y, z\})=0,\\
&&(4) ~~\circlearrowleft_{(x,y,z)}(\{[x, y], z, u\})=0,\\
&&(5) ~~\{x, y, [u, v]\}=[\{x, y, u\}, v]+([u,\{x, y, v\}],\\
&&(6) ~~ \{x, y, \{u, v, w\}\}=\{\{x, y, u\}, v, w\} +\{u,  \{x, y, v\}, w\} +\{u,  v, \{x, y, w\}\},
\end{eqnarray*}
for all $ x, y, z, u, v, w\in L$ and where $\circlearrowleft_{(x,y,z)}$ denotes the sum over cyclic permutation of $x, y, z$.
\end{definition}

\begin{example}\label{Lie to LYA}
Let $(\mathfrak g, [~, ~])$ be a Lie algebra over $k$. Then, $\mathfrak g$ has a Lie-Yamaguti algebra structure induced by the given Lie bracket, the trilinear operation being:
$$\{a, b, c\} = [[a, b], c]$$ for all $a, b, c \in \mathfrak g$.
\end{example}

\begin{example}\label{Leib to LYA}
Let $(\mathfrak g, \cdot)$ be a Leibniz algebra. Consider a bilinear operation and a trilinear operation as follows:
$$[~, ~]: \mathfrak g \times \mathfrak g \to \mathfrak g, \quad [a, b] := a\cdot b-b\cdot a,~~ a, b \in \mathfrak g;$$
$$\{~, ~, ~\}:  \mathfrak g \times \mathfrak g \times \mathfrak g \to \mathfrak g, \quad \{a, b, c\} := -(a\cdot b)\cdot c,~~a, b, c \in \mathfrak g.$$ Then, $(\mathfrak g, [~,~],\{~,~,~\})$ is a Lie-Yamaguti algebra. 
\end{example}

\begin{definition} Let $(L, [~, ~], \{~, ~, ~\})$  be a Lie-Yamaguti  algebra and $V$ be a vector space. A representation of
$L$ on $V$ consists of a linear map $\rho: L\rightarrow$ End($V$) and two bilinear maps $D, \theta: L\times L\rightarrow$ End($V$)
satisfying the following conditions
\begin{eqnarray*}
 &&(1) ~D(x, y)-\theta(y, x)+\theta(x, y)+\rho([x, y])-\rho(x)\rho(y)+\rho(y)\rho(x)=0,\\
 &&(2) ~D([x, y], z)+D([y, z],x)+D([z, x], y)=0, \\
 &&(3)  ~\theta([x, y], z)=\theta(x, z)\rho(y)-\theta(y, z)\rho(x), \\
 &&(4) ~D(x, y)\rho(z)=\rho(z)D(x, y)+\rho(\{x, y, z\}), \\
 &&(5)~\theta(x, [y, z])=\rho(y)\theta(x, z)-\rho(z)\theta(x, y),\\
 &&(6) ~D(x, y)\theta(u, v)=~\theta(u, v)D(x, y)+\theta(\{x, y, u\}, v)+\theta(u, \{x, y, v\}),\\
 &&(7)  ~\theta(x, \{y, z, u\})=\theta(z, u) \theta(x, y)-\theta(y, u) \theta(x, z)+D(y, z)\theta(x, u),
\end{eqnarray*}
for any $x, y, z, u, v\in L$. Here we say the $(\rho,D,\theta)$ is a representation of $L$ on $V.$
\end{definition}

 Let us define the cohomology groups of $L$ with coefficients
in $V$. Let $f: L\times L \times \c\c\c\times L$ be an $n$-linear map of $L$ into $V$ such that the following conditions are
satisfied:
\begin{eqnarray*}
 f(x_1, \ldots, x_{2i-1}, x_{2i}, \ldots, x_{n})=0, ~~~~\mbox{if}~~~ x_{2i-1}=x_{2i}.
\end{eqnarray*}
The vector space spanned by such linear maps is called an $n$-cochain of $L$, which is denoted by
$C^{n}(L, V)$ for $n\geq 1$.
 
\begin{definition}
For any $(f, g)\in  C^{2n}(L, V) \times C^{2n+1}(L, V )$, $n\geq 1$ the coboundary operator  $\delta: (f, g)\rightarrow (\delta_I f, \delta_{II} g)$ is a mapping from $C^{2n}(L, V) \times C^{2n+1}(L, V )$ into $C^{2n+2}(L, V) \times C^{2n+3}(L, V )$ defined as follows:
\begin{eqnarray*}
&& (\delta_If)(x_1,x_2,\ldots,x_{2n+2})\\
&=&\rho(x_{2n+1})g(x_1,x_2,\ldots,x_{2n},x_{2n+2})-\rho(x_{2n+2})g(x_1,x_2,\ldots,x_{2n},x_{2n+1})\\
  &&-g(x_1,x_2,\ldots,x_{2n},[x_{2n+1},x_{2n+2}])\\
  &&+\sum_{k=1}^{n}(-1)^{n+k+1}D(x_{2k-1},x_{2k})f(x_{1},\ldots,\widehat{x}_{2k-1},\widehat{x}_{2k},\ldots,x_{2n+2})\\
   &&+\sum_{k=1}^{n}\sum_{j=2k+1}^{2n+2}(-1)^{n+k}f(x_{1},\ldots,\widehat{x}_{2k-1},\widehat{x}_{2k},\ldots,\{x_{2k-1},x_{2k},x_j\},\ldots,x_{2n+2}),\\
&&(\delta_{II}g)(x_1,x_2,\ldots,x_{2n+3})\\
&=&\theta(x_{2n+2},x_{2n+3})g(x_1,\ldots,x_{2n+1})-\theta(x_{2n+1},x_{2n+3})g(x_1,\ldots,x_{2n},x_{2n+1})\\
  &&+\sum_{k=1}^{n+1}(-1)^{n+k+1}D(x_{2k-1},x_{2k})g(x_{1},\ldots,\widehat{x}_{2k-1},\widehat{x}_{2k},\ldots,x_{2n+3})\\
   &&+\sum_{k=1}^{n+1}\sum_{j=2k+1}^{2n+3}(-1)^{n+k}g(x_{1},\ldots,\widehat{x}_{2k-1},\widehat{x}_{2k},\ldots,\{x_{2k-1},x_{2k},x_j\},\ldots,x_{2n+3}).
\end{eqnarray*}
\end{definition}
We define $ C^{(2n,2n+1)}(L, V) :=C^{2n}(L, V) \times C^{2n+1}(L, V)$. Then $\{C^{(\bullet,\bullet)}(L,V), \delta\}$ is a cochain complex.\\
Let $C^1(L,V)$ denote the space of all linear maps from $L$ into $V$. We define the subspace $C(L,V)$ by the diagonal elements $(f,f)\in C^1(L,V) \times C^{1}(L,V.)$.
  The map $\delta : C(L,V) \rightarrow C^{(2,3)}(L,V)$ is defined by $\delta(f,f)=(\delta_If,\delta_{II}f)$, where
  \begin{align*}
 & \delta_If(a,b)=\rho(a)f(b)-\rho(b)f(a)-f([a,b]) ,\\
  &\delta_{II}(a,b,c)=\theta(b,c)f(a)-\theta(a,c)f(b)+D(a,b)f(c)-f(\{a,b,c\})
  \end{align*}
for all $a,b,c \in L.$
From \cite{KY-cohomology}, we have for any $f\in C^1(L,V)$ , $\delta_I\delta_If=\delta_{II}\delta_{II}f=0$ and In general for any $(f,g)\in C^{(2p,2p+1)}(L,V)~\mbox{we have}~~(\delta \circ \delta)(f,g)=(\delta_I \circ \delta(f),\delta_{II}\circ \delta_{II}(g))=0.$

 \section{Morphism of Lie-Yamaguti algebra and its representation}\label{sec3}
 In this section, we discuss momorphism between Lie-Yamaguti algebras and representation of Lie-Yamaguti algebra as defined in \cite{KY, KY-cohomology}.

\begin{definition}
A momorphism between two Lie-Yamaguti  algebras  $(L, [~, ~], \{~, ~, ~\})$ and $(L', [~, ~]', \{~, ~, ~\}')$ is a linear map $\varphi: L\rightarrow L'$ satisfying
\begin{eqnarray*}
\varphi([x, y])=[\varphi(x), \varphi(y)]', ~~~~~~~~~\varphi(\{x, y, z\})=\{\varphi(x), \varphi(y),\varphi(z)\}'
\end{eqnarray*}
for all $x,y,z \in L.$
\end{definition}

\begin{definition}
Let $\phi : L_1 \rightarrow L_2$ and $\phi^{'} : L_1^{'} \rightarrow L_2^{'}$ be two morphism of Lie-Yamaguti algebras. A homomorphism between these two morphism of Lie-Yamaguti algebra is a pair $(\alpha , \beta)$, where $\alpha : L_1 \rightarrow L_1^{'}$ and $\beta : L_2 \rightarrow L_2^{'}$ are two Lie-Yamaguti algebra homomorphism satisfying the condition $\phi^{'} \circ \alpha = \beta \circ \phi$.
\end{definition}
\begin{definition}
Let $\phi : L_1 \rightarrow L_2$ be a morphism of Lie-Yamaguti algebra. A representation of it is a ordered triple $(V,W,\psi)$, where $V$, $W$ are representations of $L_1$ and $L_2$ respectively, and $\psi : V \rightarrow W $ is a linear operator satisfying the following conditions
\begin{align*}
&\psi(\rho _V(x)v)=\rho_W(\phi(x))\psi(v)\\
& \psi(D_V(x,y)v)=D_W(\phi(x),\phi(y))\psi(v)\\
& \psi(\theta_V(x,y)v)=\theta_W(\phi(x),\phi(y))\psi(v)\\
\end{align*}
for all $x,y \in L_1$ and $v \in V.$
\end{definition}
The following proposition gives an example of representation of a Lie-Yamaguti algebra induced by representation of morphism of Lie-Yamaguti algebra.
\begin{proposition}
Let $\phi : L_1 \rightarrow L_2$ be a morphism of Lie-Yamaguti algebra and $(V,W,\psi)$ be a representation of it. Now the maps 
\begin{align*}
&\rho_W^{L_1}: L_1 \rightarrow End(W) \\
& D_W^{L_1} : L_1 \times L_1 \rightarrow End(W)\\
& \theta_W^{L_1} : L_1 \times L_1 \rightarrow End(W)
\end{align*}
 defined by  
\begin{align*}
& \rho_W^{L_1}(x)v=\rho_W (\phi(x))v \\
& \theta_W^{L_1}(x,y)v= \theta _W(\phi(x),\phi(y))v\\
& D_W^{L_1}(x,y)v= D _W(\phi(x),\phi(y))v
\end{align*}
respectively for all $x,y \in L_1$ and $v \in V$ makes
$(\rho _W^{L_1}, D_W^{L_1},\theta_W^{L_1})$ is representation of $L_1$ on $W.$ 
\end{proposition}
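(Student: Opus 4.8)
The plan is to verify directly that the triple $(\rho_W^{L_1}, D_W^{L_1}, \theta_W^{L_1})$ satisfies each of the seven axioms in the definition of a representation of $L_1$ on $W$. The guiding principle is that these three maps are nothing but the pullback of the given representation $(\rho_W, D_W, \theta_W)$ of $L_2$ along the morphism $\phi$. Since $\phi$ preserves both the binary bracket and the ternary operation, each identity required of the pulled-back structure should reduce to the corresponding identity already known to hold for $W$ as an $L_2$-representation, now evaluated at the images $\phi(x), \phi(y), \ldots \in L_2$.

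First I would substitute the defining formulas into each axiom and then apply the morphism identities $\phi([x,y]) = [\phi(x),\phi(y)]$ and $\phi(\{x,y,z\}) = \{\phi(x),\phi(y),\phi(z)\}$. For instance, for axiom (1), replacing every occurrence of $\rho_W^{L_1}, D_W^{L_1}, \theta_W^{L_1}$ by their definitions turns the left-hand side into
$$D_W(\phi(x),\phi(y)) - \theta_W(\phi(y),\phi(x)) + \theta_W(\phi(x),\phi(y)) + \rho_W(\phi([x,y])) - \rho_W(\phi(x))\rho_W(\phi(y)) + \rho_W(\phi(y))\rho_W(\phi(x)).$$
Rewriting $\rho_W(\phi([x,y])) = \rho_W([\phi(x),\phi(y)])$ via the morphism property, the whole expression becomes precisely axiom (1) for the $L_2$-representation $W$ evaluated at $\phi(x),\phi(y)$, and hence vanishes. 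The remaining axioms follow the same pattern: for those involving only the binary bracket, namely (2), (3), and (5), I would apply $\phi([x,y]) = [\phi(x),\phi(y)]$ wherever a bracket appears inside an argument; for those involving the ternary operation, namely (4), (6), and (7), I would additionally apply $\phi(\{x,y,z\}) = \{\phi(x),\phi(y),\phi(z)\}$. In each case the identity collapses onto the corresponding axiom for $(\rho_W, D_W, \theta_W)$ with arguments in $L_2$.

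There is essentially no genuine obstacle here: the argument is a routine transport of structure along $\phi$. The only point requiring mild care is bookkeeping, namely ensuring that every bracket and every ternary argument is converted via the correct morphism identity \emph{before} invoking the $L_2$-axiom. This is most delicate in axioms (4), (6), and (7), where the trilinear operation sits nested inside the representation maps, so one must make sure the substitution $\phi(\{x,y,z\}) = \{\phi(x),\phi(y),\phi(z)\}$ is applied to the innermost argument first.
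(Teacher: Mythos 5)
Your proposal is correct and matches the paper's own argument: the paper likewise verifies the axioms by substituting the pulled-back maps, rewriting $\phi([x,y])$ and $\phi(\{x,y,z\})$ via the morphism identities, and invoking the corresponding axiom for $(\rho_W, D_W, \theta_W)$ as an $L_2$-representation (it writes out axioms (1) and (2) explicitly and declares the rest similar, just as you write out (1) and describe the pattern).
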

\begin{proof}
Let $x,y,z \in L_1$ and $w \in W.$ Observe that
\begin{align*}
&D_W^{L_{1}}(x,y)w-\theta _W^{L_{1}}(y,x)w+\theta _W^{L_{1}}(x,y)w+\rho _W^{L_{1}}([x,y])w\\
&-\rho _W^{L_{1}}(x)\rho _W^{L_{1}}(y)w+\rho _W^{L_{1}}(y)\rho _W^{L_{1}}(x)w\\
&=D_W(\phi(x),\phi(y))w-\theta_W(\phi(y),\phi(x))w+\theta_W(\phi(x),\phi(y))w+\rho_W([\phi(x),\phi(y)])w\\
&-\rho _W^{L_{1}}(x)\rho_W(\phi(y))w+\rho _W^{L_{1}}(y)\rho_W(\phi(x))w\\
&=D_W(\phi(x),\phi(y))w-\theta_W(\phi(y),\phi(x))w+\theta_W(\phi(x),\phi(y))w+\rho_W([\phi(x),\phi(y)])w\\
&-\rho _W(\phi(x))\rho_W(\phi(y))w+\rho(\phi(y))\rho_W(\phi(x))w\\
&=0,~~~\mbox{as}~ W ~\mbox{is a representation of}~ L_2. 
\end{align*}
On the other hand, we have
\begin{align*}
&D_W^{L_{1}}([x,y],z)w+D_W^{L_{1}}([y,z],x)w+D_W^{L_{1}}([z,x],y)\\
&=D_W(\phi([x,y]),\phi(z))w+D_W(\phi([y,z]),\phi(x))w+D_W(\phi([z,x]),\phi(y))w\\
&=D_W([\phi(x),\phi(y)],\phi(z))w+D_W([\phi(y),\phi(z)],\phi(x))w+D_W([\phi(z),\phi(x)],\phi(y))w\\
&=0,~~~\mbox{as}~ W ~\mbox{is a representation of}~ L_2. 
\end{align*}

The other axioms of representation of a Lie-Yamaguti algebra can be proved similarly as above. Hence, $(\rho _W^{L_1}, D_W^{L_1},\theta_W^{L_1})$ is a representation of $L_1$ on $W.$ 
\end{proof}
We denote the above representation by $W_{\phi}.$ The following proposition shows that given a homomorphism between two morphism of Lie-Yamaguti algebra induces a representation of a morphism of Lie-Yamaguti algebra.
\begin{proposition}
Let $\phi : L_1 \rightarrow L_2$, $\phi^{'} : L_1^{'} \rightarrow L_2^{'}$ be two morphism of Lie Yamaguti algebras and $(\alpha, \beta)$ be a homomorphism between them. Then, $( L_1^{'}, L_2^{'}, \phi^{'})$  is a representation of Lie-Yamaguti algebra morphism $\phi : L_1 \rightarrow L_2$, where $\rho_{L_1^{'}} (x): L_1 \rightarrow End(L_1^{'}), ~D_{L_1^{'}}(x,y):L_1 \times L_1 \rightarrow End(L_1^{'}),~ \theta_{L_1^{'}}(x,y):L_1 \times L_1 \rightarrow End(L_1^{'})$  are defined by 
\begin{align*}
&\rho(x)x^{'}=[\alpha(x),x^{'}]\\
&D(x,y)x^{'}=\{\alpha(x),\alpha(y),x^{'}\}\\
&\theta(x,y)x^{'}=\{x^{'},\alpha(x),\alpha(y)\}
\end{align*}
respectively for all $x,y \in L_1$ and $x^{'}\in L_1^{'}$. Also, $\rho_{L_2^{'}} (x): L_2 \rightarrow End(L_2^{'}),~ D_{L_2^{'}}(x,y):L_2 \times L_2 \rightarrow End(L_2^{'}),~ \theta_{L_2^{'}}(x,y):L_2 \times L_2 \rightarrow End(L_2^{'})$  are defined by 
\begin{align*}
&\rho(x)x^{'}=[\beta(x),x^{'}]\\
&D(x,y)x^{'}=\{\beta(x),\beta(y),x^{'}\}\\
&\theta(x,y)x^{'}=\{x^{'},\beta(x),\beta(y)\}
\end{align*}
respectively for all $x,y \in L_2$ and $x^{'}\in L_2^{'}$.
\end{proposition}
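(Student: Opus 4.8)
The plan is to unwind the definition of a representation of a Lie-Yamaguti algebra morphism and verify its three requirements in turn: that $L_1'$ is a representation of $L_1$, that $L_2'$ is a representation of $L_2$, and that $\phi'$ is a linear operator intertwining these two structures in the prescribed way.

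First I would observe that every Lie-Yamaguti algebra $(L', [~,~]', \{~,~,~\}')$ carries an \emph{adjoint} representation on its underlying space, given by $\rho(a)z = [a,z]'$, $D(a,b)z = \{a,b,z\}'$ and $\theta(a,b)z = \{z,a,b\}'$. Checking the seven representation axioms for this assignment is a direct translation of the defining identities $(1)$--$(6)$ of a Lie-Yamaguti algebra together with the antisymmetry $\{a,b,c\}' = -\{b,a,c\}'$; for instance, representation axiom $(2)$ is identity $(4)$, representation axiom $(4)$ is identity $(5)$, representation axiom $(5)$ is identity $(5)$ after using antisymmetry of $[~,~]'$, and representation axioms $(6)$--$(7)$ are repackagings of identity $(6)$ once antisymmetry is applied in the first two slots of the ternary bracket. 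The displayed formulas for $\rho_{L_1'}, D_{L_1'}, \theta_{L_1'}$ are then precisely the composition of this adjoint representation of $L_1'$ with the morphism $\alpha : L_1 \to L_1'$, and likewise $\rho_{L_2'}, D_{L_2'}, \theta_{L_2'}$ are the adjoint representation of $L_2'$ composed with $\beta$. Hence the fact that they define representations of $L_1$ and $L_2$ respectively follows from the pullback principle already established in the previous proposition (the construction $W_\phi$): composing a representation of the target with a Lie-Yamaguti morphism yields a representation of the source, precisely because the morphism preserves both operations.

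It then remains to verify the three intertwining identities for $\psi = \phi'$, namely $\phi'(\rho_{L_1'}(x)x') = \rho_{L_2'}(\phi(x))\phi'(x')$ together with its $D$- and $\theta$-analogues, for all $x,y \in L_1$ and $x' \in L_1'$. Each of these reduces to a one-line computation resting on two facts: that $\phi'$ is a morphism, hence preserves $[~,~]$ and $\{~,~,~\}$, and that the defining square commutes, $\phi' \circ \alpha = \beta \circ \phi$. For example,
\begin{align*}
\phi'(\rho_{L_1'}(x)x') &= \phi'([\alpha(x),x']) = [\phi'(\alpha(x)), \phi'(x')]' \\
&= [\beta(\phi(x)), \phi'(x')]' = \rho_{L_2'}(\phi(x))\phi'(x'),
\end{align*}
and the $D$- and $\theta$-identities follow identically by expanding $\phi'(\{\alpha(x),\alpha(y),x'\}) = \{\phi'\alpha(x), \phi'\alpha(y), \phi'(x')\}'$ and $\phi'(\{x',\alpha(x),\alpha(y)\}) = \{\phi'(x'), \phi'\alpha(x), \phi'\alpha(y)\}'$ and then substituting $\phi'\alpha = \beta\phi$.

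The only genuinely laborious step is the verification that the adjoint assignment satisfies all seven representation axioms; everything else is bookkeeping. I expect the main obstacle, such as it is, to be purely notational: keeping the three slots of the ternary operation straight and invoking antisymmetry at exactly the right place when matching representation axioms $(3)$, $(6)$ and $(7)$ against Lie-Yamaguti identities $(4)$ and $(6)$. Once the adjoint representation is secured, its pullback along $\alpha$ and $\beta$ and the intertwining property of $\phi'$ are immediate.
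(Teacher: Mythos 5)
Your proof is correct, but it is organized differently from the paper's. The paper verifies the representation axioms for $(\rho_{L_1'},D_{L_1'},\theta_{L_1'})$ head-on: it writes out axioms (1) and (2) with $\alpha(x),\alpha(y)$ already inserted, uses in a single computation both the morphism property of $\alpha$ (to pull $\alpha$ inside the brackets, e.g.\ $\{\alpha([x,y]),\alpha(z),x'\}=\{[\alpha(x),\alpha(y)],\alpha(z),x'\}$) and the Lie-Yamaguti identities of $L_1'$, and then declares the remaining axioms similar; the intertwining of $\phi'$ is checked exactly as in your last display, via $\phi'\circ\alpha=\beta\circ\phi$ and the fact that $\phi'$ preserves both operations. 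You instead factor the argument into two independent ingredients: first the adjoint representation of any Lie-Yamaguti algebra on itself (where the seven axioms are pure restatements of the defining identities, as in your axiom-matching), and then the pullback principle from the preceding proposition (the $W_\phi$ construction), whose proof indeed only uses that the coefficients form a representation of the target and that the map is a morphism, so applying it along $\alpha$ and $\beta$ is legitimate. What your decomposition buys is modularity and non-duplication: the laborious seven-axiom check is done once, for the algebra acting on itself, and the insertion of $\alpha$ comes for free from an already-proved result, whereas the paper's direct computation silently re-proves the pullback mechanism it had just established. The cost is that you must state and verify the adjoint-representation lemma separately (which you, like the paper, only sketch for the harder axioms (3), (6), (7) --- a comparable level of deferral, and the slot-bookkeeping there is exactly where care is needed, e.g.\ $\theta(x,[y,z])w=\{w,x,[y,z]\}$ matches identity (5) only after flipping the binary bracket). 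Both routes are sound; yours is arguably the cleaner way to present the result.
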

\begin{proof}
For any $x,y,z \in L_1$ and $x^{'}\in L_1^{'}$, we have
\begin{align*}
&D(x,y)x^{'}-\theta(y,x)x^{'}+\theta(x,y)x^{'}+\rho([x,y])x^{'}-\rho(x)\rho(y)x^{'}+\rho(y)\rho(x)x^{'}\\
&=\{\alpha (x),\alpha(y),x^{'}\}-\{x^{'},\alpha(y),\alpha(x)\}+\{x^{'},\alpha(x),\alpha(y)\}\\
&+[[\alpha(x),\alpha(y)],x^{'}]-[\alpha(x),[\alpha(y),x^{'}]]+[\alpha(y),[\alpha(x),x^{'}]]\\
&=\{\alpha (x),\alpha(y),x^{'}\}+\{\alpha(y),x^{'},\alpha(x)\}+\{x^{'},\alpha(x),\alpha(y)\}\\
&+[[\alpha(x),\alpha(y)],x^{'}]+[[\alpha(y),x^{'}],\alpha(x)]+[[x^{'},\alpha(x)],\alpha(y)]\\
&=0.
\end{align*}
Again,
\begin{align*}
&D([x,y],z)x^{'}+D([y,z],x)x^{'}+D([z,x],y)x^{'}\\
&=\{\alpha([x,y]),\alpha(z),x^{'}\}+\{\alpha([y,z]),\alpha(x),x^{'}\}+\{\alpha([z,x]),\alpha(y),x^{'}\}\\
&=\{[\alpha (x),\alpha (y)],\alpha(z),x^{'}\}+\{[\alpha(y),\alpha(z)],\alpha(x),x^{'}\}+\{[\alpha(z),\alpha(x)],\alpha(y),x^{'}\}\\
&=0.
\end{align*}
In a similar way other axioms of representation of Lie-Yamaguti algebra can be shown.
Therefore, $L_1^{'}$ is a representation of $L_1$. In a similar way as above $L_2^{'}$ is a representation of $L_2.$
Now, \[\phi^{'}(\rho_{L_1^{'}}(x)x^{'})=\phi^{'}([\alpha(x),x^{'}])=[\phi^{'}\circ \alpha (x),\phi^{'}(x^{'})]\\
=[\beta \circ \phi^{'}(x^{'}),\phi^{'}(x^{'})]=\rho_{L_2^{'}}(\phi(x))\phi^{'}(x^{'})\]
The other two conditions for representation for morphism of Lie-Yamaguti algebra can be shown same way as above.
Hence, $ (L_1^{'}, L_2^{'}, \phi)$  is a representation of Lie-Yamaguti algebra morphism $\phi : L_1 \rightarrow L_2.$ 
\end{proof}

\section{Cohomology of morphism of Lie-Yamaguti algebras}\label{sec4}
In this section, we define the cohomology of morphism of Lie-Yamaguti algebra using cohomolgy theory of Lie-Yamaguti algebra motivated by the cohomolgoy theory of morphism of different algebras in \cite{GS83, ashish, das}.\\
Let $\phi : L_1 \rightarrow L_2$ be a morphism of Lie-Yamaguti algebra and $(V,W,\psi)$ be a representation of it. Now we have the following three cochain complexes
\begin{itemize}
\item $\{C^{(\bullet,\bullet)}(L_1,V),\delta^{'}\}$, the cochain complex of $L_1$ with coefficients in $V.$
\item $\{C^{(\bullet,\bullet)}(L_2,W),\delta^{''}\}$, the cochain complex of $L_2$ with coefficients in $W.$
\item $\{C^{(\bullet,\bullet)}(L_1,W_{\phi}),\delta^{'''}\}$, the cochain complex of $L_1$ with coefficients in $W_\phi.$
\end{itemize}

 Now, using the above three cochain complex we define the cohomology of the morphism of Lie Yamaguti algebra $\phi : L_1 \rightarrow L_2$ with coefficients in $(V,W,\psi).$

 For each $n \geq 1$, we define the $(2n,2n+1)$-th cochain group $C^{(2n,2n+1)}_{mLYA}(\phi, \psi)$  by 
 
$$C^{(2n,2n+1)}_{mLYA}(\phi, \psi)
=
  \begin{cases}
& C^{(2,3)}(L_1,V)\oplus C^{(2,3)}(L_2,W) \oplus C(L_1,W_{\phi}) ~~\mbox{if}~~n=1\\
& C^{(2n,2n+1)}(L_1,V) \oplus C^{(2n,2n+1)}(L_2,W) \oplus C^{(2n-2,2n-1)}(L_1,W_{\phi}),~~~\mbox{if} ~~n\geq 2.
\end{cases}$$

We define the coboundary map 
$\delta_{mLYA} : C^{(2n,2n+1)}_{mLYA}(\phi, \psi) \rightarrow C^{(2n+2,2n+3)}_{mLYA}(\phi, \psi)$  by
\[
  \delta_{mLYA} (\alpha , \beta , \gamma) =
  \begin{cases}
   (\delta(\alpha), \delta(\beta), \phi \circ \alpha -\beta \phi -\delta (\gamma) )  & \text{for } (\alpha , \beta , \gamma) \in C^{(2,3)}_{mLYA}(\phi, \psi), n=1 \\
    (\delta^{'}(\alpha), \delta^{''}(\beta), \psi \circ \alpha- \beta \circ \wedge^n \phi - \delta^{'''}(\gamma)) & \text{for } (\alpha , \beta , \gamma) \in C^{(2n,2n+1)}_{mLYA}(\phi, \psi), n\geq 2, 
  \end{cases} 
\]
 where  
$\delta: C(L_1,W_{\phi}) \rightarrow C^{(2,3)}(L_1, W_{\phi})$ is defined by
$\delta (\gamma)=(\delta_I\gamma ,\delta_{II}\gamma)$ such that 
\begin{align*}
&\delta_I \gamma(x,y)=[\phi(x), \gamma (y)]+ [\gamma (x), \phi(y)]-\gamma([x,y]) \\
& \mbox{and}\\
& \delta_{II}\gamma (x,y,z)= \{\gamma (x), \phi(y),\phi (z)\}+\{\phi(x), \gamma (y), \phi (z)\}+\{\phi (x),\phi (y), \gamma(z)\}-\gamma ( \{x,y,z\})
\end{align*} 
for all $x,y,z \in L_1.$ Also, note that $\beta \phi :L_1^{\otimes n} \rightarrow W_{\phi}$ is a linear function defined by 
$$\beta \phi (x_1,x_2,\ldots,x_n)=\beta (\phi(x_1),\phi(x_2), \ldots, \phi(x_n))$$ for all $x_1,x_2,\ldots,x_n \in L_1.$

\begin{proposition}
The map $\delta_{mLYA}$ defined above satisfies $\delta_{mLYA}^2=0.$
\end{proposition}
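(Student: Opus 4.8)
The plan is to exploit the mapping-cone structure of $\delta_{mLYA}$: the first two slots evolve by the ordinary Yamaguti coboundaries, and the third slot records the failure of the ``morphism part'' to commute with the differential. Abbreviate this morphism part by $\Phi(\alpha,\beta)=\psi\circ\alpha-\beta\circ\wedge^\bullet\phi$ (postcomposition by $\psi$ minus pullback along $\phi$), so that in every degree
\[
\delta_{mLYA}(\alpha,\beta,\gamma)=\big(\delta'\alpha,\ \delta''\beta,\ \Phi(\alpha,\beta)-\delta'''\gamma\big),
\]
with the convention that in degree $n=1$ the operators $\delta',\delta''$ are the coboundaries $C(L_i,-)\to C^{(2,3)}(L_i,-)$ and $\delta'''$ is the diagonal coboundary $\delta:C(L_1,W_\phi)\to C^{(2,3)}(L_1,W_\phi)$ of Section \ref{sec4}. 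First I would apply $\delta_{mLYA}$ twice and read off the three components of $\delta_{mLYA}^2(\alpha,\beta,\gamma)$.

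The first two components are $\delta'\delta'\alpha$ and $\delta''\delta''\beta$, which vanish by the identity $\delta\circ\delta=0$ for Yamaguti cohomology recalled at the end of Section \ref{sec2} (applied to the representation $V$ of $L_1$ and $W$ of $L_2$). Hence all the content sits in the third component, which equals
\[
\Phi(\delta'\alpha,\delta''\beta)-\delta'''\big(\Phi(\alpha,\beta)-\delta'''\gamma\big)
=\big(\psi\circ\delta'\alpha-\delta'''(\psi\circ\alpha)\big)-\big((\delta''\beta)\circ\wedge^\bullet\phi-\delta'''(\beta\circ\wedge^\bullet\phi)\big)+\delta'''\delta'''\gamma.
\]
Using once more that $\delta'''\circ\delta'''=0$ (now for the $L_1$-representation $W_\phi$), the last term drops out, and the proposition reduces to the two commutation identities
\[
\psi\circ\delta'\alpha=\delta'''(\psi\circ\alpha)\qquad\text{and}\qquad (\delta''\beta)\circ\wedge^\bullet\phi=\delta'''(\beta\circ\wedge^\bullet\phi),
\]
for all $\alpha\in C^{(\bullet,\bullet)}(L_1,V)$ and $\beta\in C^{(\bullet,\bullet)}(L_2,W)$; that is, postcomposition by $\psi$ and pullback along $\phi$ are both cochain maps into $\{C^{(\bullet,\bullet)}(L_1,W_\phi),\delta'''\}$.

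For the first identity I would argue term by term on the explicit coboundary formulas of Section \ref{sec2}. The differentials $\delta'$ and $\delta'''$ are given by the \emph{same} combinatorial expression; the only difference is that $\delta'$ uses the structure operators $\rho_V,D_V,\theta_V$ of $V$, while $\delta'''$ uses those of $W_\phi$, namely $\rho_W^{L_1}=\rho_W\circ\phi$, $D_W^{L_1}=D_W\circ(\phi\times\phi)$, $\theta_W^{L_1}=\theta_W\circ(\phi\times\phi)$. The inner arguments occurring in the formula --- the bracket $[x_{2n+1},x_{2n+2}]$ and the inserted triple products $\{x_{2k-1},x_{2k},x_j\}$ --- all lie in $L_1$ and are therefore untouched, so $\psi$ passes through them by linearity. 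For the outer operators, the defining relations of a representation of a morphism, $\psi(\rho_V(x)v)=\rho_W(\phi(x))\psi(v)$, $\psi(D_V(x,y)v)=D_W(\phi(x),\phi(y))\psi(v)$ and $\psi(\theta_V(x,y)v)=\theta_W(\phi(x),\phi(y))\psi(v)$, convert each $V$-operator applied to a value of $\alpha$ into the corresponding $W_\phi$-operator applied to the value of $\psi\circ\alpha$. Matching the two expressions slot for slot --- the $\rho/\theta$ block, the $D$-block, and the double sum --- gives $\psi\circ\delta'\alpha=\delta'''(\psi\circ\alpha)$; the same verification runs verbatim on the low-degree formulas for $\delta_I,\delta_{II}$ in the case $n=1$.

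For the second identity I would evaluate $\delta''\beta$ at the $\phi$-images $(\phi(x_1),\ldots)$. Each outer $W$-operator then appears at $\phi$-arguments and is by definition exactly the corresponding $W_\phi$-operator at the original arguments, while the inner $L_2$-operations $[\phi(x_i),\phi(x_j)]'$ and $\{\phi(x_i),\phi(x_j),\phi(x_k)\}'$ equal $\phi([x_i,x_j])$ and $\phi(\{x_i,x_j,x_k\})$ since $\phi$ is a morphism of Lie-Yamaguti algebras; consequently $\beta$ evaluated on them equals $\beta\circ\wedge^\bullet\phi$ evaluated on the corresponding $L_1$-operations. Reading the two sums off term by term yields $(\delta''\beta)\circ\wedge^\bullet\phi=\delta'''(\beta\circ\wedge^\bullet\phi)$. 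The main obstacle here is purely bookkeeping: one must check that the signs $(-1)^{n+k+1}$, $(-1)^{n+k}$ and the summation ranges are preserved under both operations. Since neither $\psi\circ-$ nor $-\circ\wedge^\bullet\phi$ moves any argument or alters the positions at which brackets and triple products are inserted, the combinatorics is unchanged and the signs match automatically, so the only analytic inputs consumed are the representation-of-morphism relations for $\psi$ and the morphism relations for $\phi$.
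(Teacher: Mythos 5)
Your proposal is correct and follows essentially the same route as the paper: the paper likewise reduces $\delta_{mLYA}^2=0$ to the vanishing of the squares of $\delta',\delta'',\delta'''$ together with the two commutation identities $\psi\circ\delta'(\alpha)=\delta'''(\psi\circ\alpha)$ and $\delta''(\beta)\circ\wedge^{n+1}\phi=\delta'''(\beta\circ\wedge^{n}\phi)$, which it then verifies term by term using the representation-of-morphism relations for $\psi$ and the morphism relations for $\phi$, exactly as you do. Your uniform treatment of the $n=1$ case (writing the third component as $\psi\circ\alpha-\beta\circ\wedge^{\bullet}\phi-\delta'''\gamma$ throughout) is in fact slightly cleaner than the paper's separate Case~I, but it is the same argument.
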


\begin{proof}
\textbf{Case-I:} \\For $n=1,$ and $(\alpha, \beta, \gamma) \in C^{(2,3)}_{mLYA}(\phi, \psi)$, we have
\begin{align*}
&d^2(\alpha,\beta,\gamma)\\
&=d(\delta(\alpha), \delta(\beta), \phi \circ \alpha -\beta \phi -\delta (\gamma))\\
&=(\delta^2(\alpha), \delta^2(\beta),\phi(\delta(\alpha))-\delta (\beta)\phi -\delta (\phi \circ \alpha - \beta \phi- \delta(\gamma)))\\
&= (\delta^2(\alpha), \delta^2(\beta),\phi(\delta(\alpha))-\delta (\beta)\phi -\delta (\phi \circ \alpha) - \delta(\beta \phi))\\
&=0,~ \mbox{since}~ \delta^2=0 ~ by ~\cite{KY}. 
\end{align*}

\textbf{Case-II:} For $n\geq 2.$\\
For $(\alpha , \beta , \gamma) \in C^{(2n,2n+1)}_{mLYA}(\phi, \psi)$, we have 
\begin{align}
&\delta_{mLYA}^2(\alpha, \beta ,\gamma)\nonumber \\
&=(\delta^{'}(\alpha), \delta^{''}(\beta), \psi \circ \alpha - \beta \circ \wedge^n \phi - \delta^{'''}(\gamma)) \nonumber \\
&=((\delta^{'})^2(\alpha),(\delta^{''})^2(\beta),\psi \circ \delta^{'}(\alpha)-\delta^{''}(\beta) \circ \wedge^{n+1}\phi -\delta^{'''}(\psi \circ \alpha)+\delta^{'''}(\beta \circ \wedge^n \phi)+(\delta^{'''})^2(\gamma)) \nonumber\\
&=(0,0, \psi \circ \delta^{'}(\alpha)-\delta^{''}(\beta) \circ \wedge^{n+1}\phi -\delta^{'''}(\psi \circ \alpha)+\delta^{'''}(\beta \circ \wedge^n \phi)) \label{Eq1}\
\end{align}

For $ x_1,x_2,\ldots,x_{2n+2} \in L_1$, observe that
\begin{align}
&\delta^{'''}(\psi \circ \alpha)(x_1,x_2,\ldots,x_{2n+2}) \nonumber\\
&=\delta^{'''}(\psi \circ \alpha_I,\psi \circ \alpha_{II})(x_1,x_2,\ldots,x_{2n+2}) \nonumber \\
&=(\delta^{'''}_I(\psi \circ \alpha_I),\delta^{'''}_{II}(\psi \circ \alpha_{II}))(x_1,x_2,\ldots,x_{2n+2}) \nonumber \label{Eq2}.
\end{align}

Again,
\begin{align*}
&\delta^{'''}_I(\psi \circ \alpha_I)(x_1,x_2,\ldots,x_{2n+2})\\
&=\rho_{W}^{L_1}(x_{2n+1})(\psi \circ \alpha_{II})(x_1,x_2,\ldots,x_{2n},x_{2n+2})-\rho_{W}^{L_1}(x_{2n+2})(\psi \circ \alpha_{II})(x_1,x_2,\ldots,x_{2n},x_{2n+1})\\
  &-(\psi \circ \alpha_{II})(x_1,x_2,\ldots,x_{2n},[x_{2n+1},x_{2n+2}])\\
  &+\sum_{k=1}^{n}(-1)^{n+k+1}D_W^{L_1}(x_{2k-1},x_{2k})(\psi \circ \alpha_{I})(x_{1},\ldots,\widehat{x}_{2k-1},\widehat{x}_{2k},\ldots,x_{2n+2})\\
   &+\sum_{k=1}^{n}\sum_{j=2k+1}^{2n+2}(-1)^{n+k}(\psi \circ \alpha_{I})(x_{1},\ldots,\widehat{x}_{2k-1},\widehat{x}_{2k},\ldots,\{x_{2k-1},x_{2k},x_j\},\ldots,x_{2n+2}),\\
&=\psi(\rho_V(x_{2n+1})\alpha_{II}(x_1,x_2,\ldots,x_{2n},x_{2n+2}))- \psi (\rho_V(x_{2n+2})\alpha_{II}(x_1,x_2,\ldots,x_{2n},x_{2n+1}))\\
  &-\psi (\alpha_{II}(x_1,x_2,\ldots,x_{2n},[x_{2n+1},x_{2n+2}]))\\
  &+\sum_{k=1}^{n}(-1)^{n+k+1}\psi_V(D_V(x_{2k-1},x_{2k}) \alpha_{I}(x_{1},\ldots,\widehat{x}_{2k-1},\widehat{x}_{2k},\ldots,x_{2n+2}))\\
   &+\sum_{k=1}^{n}\sum_{j=2k+1}^{2n+2}(-1)^{n+k}\psi ( \alpha_{I}(x_{1},\ldots,\widehat{x}_{2k-1},\widehat{x}_{2k},\ldots,\{x_{2k-1},x_{2k},x_j\},\ldots,x_{2n+2})),\\
   &=(\psi \circ \delta^{'}(\alpha_I))(x_1,x_2,\ldots,x_{2n+2}).
 \end{align*}
Thus, we have 
\[\delta^{'''}_I(\psi \circ \alpha_I)= (\psi \circ \delta^{'}(\alpha_I))\]
Similarly, it can be proved that
\[\delta^{'''}_{II}(\psi \circ \alpha_{II})= (\psi \circ \delta^{'}(\alpha_{II})).\]

Now from the Equation (\ref{Eq2}), we have

\begin{align*}
&\delta^{'''}(\psi \circ \alpha)(x_1,x_2,\ldots,x_{2n+2}) \nonumber\\
&=(\delta^{'''}_I(\psi \circ \alpha_I),\delta^{'''}_{II}(\psi \circ \alpha_{II}))(x_1,x_2,\ldots,x_{2n+2}) \\
&= (\psi \circ \delta^{'}(\alpha_I),\psi \circ \delta^{'}(\alpha_{II}))(x_1,x_2,\ldots,x_{2n+2})\\
&=(\psi \circ \delta^{'}(\alpha))(x_1,x_2,\ldots,x_{2n+2}).
\end{align*}
 Hence, $\delta^{'''}(\psi \circ \alpha)=(\psi \circ \delta^{'}(\alpha)).$

Now, we want to show that
\[\delta ^{'''}(\beta \circ \wedge ^{n} \phi)= \delta ^{''}(\beta)\circ \wedge^{n+1} (\phi)\]
Note that \begin{align}
&\delta^{'''}(\beta \circ \wedge^n \phi )(x_1,x_2,\dots ,x_{2n+2}) \nonumber\\
&=\delta^{'''}(\beta_I \circ \wedge^n \phi,\beta_{II} \circ \wedge^{n} \phi ) (x_1,x_2,\dots ,x_{2n+2}) \nonumber\\
&=(\delta^{'''}_I(\beta_I \circ \wedge^n \phi), \delta_{II}^{'''}(\beta_{II} \circ \wedge^{n} \phi))(x_1,x_2,\dots ,x_{2n+2}).\label{Eq3}
\end{align}
Again, we have\begin{align*}
&\delta^{'''}_I(\beta_I \circ \wedge^n \phi)(x_1,x_2,\dots ,x_{2n+2})\\
&=\rho_{W}^{L_1}(x_{2n+1})(\beta_{II} \circ \wedge^n \phi)(x_1,x_2,\ldots,x_{2n},x_{2n+2})-\rho_{W}^{L_1}(x_{2n+2})(\beta_{II} \circ \wedge^n \phi)(x_1,x_2,\ldots,x_{2n},x_{2n+1})\\
  &-(\beta_{II} \circ \wedge^n \phi)(x_1,x_2,\ldots,x_{2n},[x_{2n+1},x_{2n+2}])\\
  &+\sum_{k=1}^{n}(-1)^{n+k+1}D_W^{L_1}(x_{2k-1},x_{2k})(\beta_I \circ \wedge^n \phi)(x_{1},\ldots,\widehat{x}_{2k-1},\widehat{x}_{2k},\ldots,x_{2n+2})\\
   &+\sum_{k=1}^{n}\sum_{j=2k+1}^{2n+2}(-1)^{n+k}(\beta_I \circ \wedge^n \phi)(x_{1},\ldots,\widehat{x}_{2k-1},\widehat{x}_{2k},\ldots,\{x_{2k-1},x_{2k},x_j\},\ldots,x_{2n+2}),\\
&=\rho_W(\phi(x_{2n+1}))\beta_{II}(\phi (x_1),\phi(x_2),\cdots,\phi(x_{2n}),\phi(x_{2n+2}))\\
&-\rho_W(\phi (x_{2n+2})) \beta_{II}(\phi(x_1),\phi(x_2), \cdots, \phi(x_{2n}),\phi(x_{2n+1}))\\
&-\beta_{II}(\phi (x_1), \phi(x_2),\cdots,\phi(x_{2n}),[\phi (x_{2n+1}),\phi (x_{2n+2})])\\
& +\sum_{k=1}^{n}(-1)^{n+k+1}D_W(\phi (x_{2k-1}),\phi(x_{2k}))\beta_{I}(\phi(x_1),\cdots, \widehat{\phi(x_{2k-1})},\widehat{\phi(x_{2k})},\ldots,\phi (x_{2n+2}))\\
&+\sum_{k=1}^{n}\sum_{j=2k+1}^{2n+2}(-1)^{n+k}\beta_I(\phi(x_1),\ldots,\widehat{\phi(x_{2k-1})},\widehat{\phi(x_{2k})},\ldots,\{\phi (x_{2k-1}),\phi (x_{2k}),\phi(x_j)\},\ldots,\phi(x_{2n+2}))\\
&=\delta_I^{''}(\beta_I)\circ \wedge^{n+1}(\phi)(x_1,x_2,\dots ,x_{2n+2}).
\end{align*}
Hence, we have \[\delta^{'''}_I(\beta_I \circ \wedge^n \phi)=\delta_I^{''}(\beta_I)\circ \wedge^{n+1}(\phi).\]
Similarly, we have \[\delta^{'''}_{II}(\beta_{II}\circ \wedge^n \phi)=\delta_{II}^{''}(\beta_{II})\circ \wedge^{n+1}(\phi).\]

Now from the Equation (\ref{Eq3}), we have
\begin{align*}
&\delta^{'''}(\beta \circ \wedge^n \phi )(x_1,x_2,\dots ,x_{2n+2})\\
&=(\delta^{'''}_I(\beta_I \circ \wedge^n \phi), \delta_{II}^{'''}(\beta_{II} \circ \wedge^{n} \phi))(x_1,x_2,\dots ,x_{2n+2})\\
&=(\delta_I^{''}(\beta_I)\circ \wedge^{n+1}(\phi),\delta_{II}^{''}(\beta_{II})\circ \wedge^{n+1}(\phi))(x_1,x_2,\dots ,x_{2n+2})\\
&=\delta ^{''}(\beta)\circ \wedge^{n+1} (\phi)(x_1,x_2,\dots ,x_{2n+2}).
\end{align*}
Hence, we have \[\delta ^{'''}(\beta \circ \wedge ^{n} \phi)= \delta ^{''}(\beta)\circ \wedge^{n+1} (\phi).\]
Therefore, from the Equation  (\ref{Eq1}), we have $\delta_{mLYA}^2=0.$
\end{proof}
Thus, it follows from the above proposition that $\{C^{(\bullet, \bullet)}_{mLYA},\delta_{mLYA}\}$ is a cochain complex. Let the space of $(2n,2n+1)$-th cocyles are denoted by $Z^{(2n,2n+1)}_{mLYA}(\phi, \psi)$ and the space of $(2n,2n+1)$-th coboundaries are denoted by $B^{(2n,2n+1)}_{mLYA}(\phi, \psi)$. Observe that, we have $B^{(2n,2n+1)}_{mLYA}(\phi, \psi) \subset Z^{(2n,2n+1)}_{mLYA}(\phi, \psi).$ The corresponding quotient groups
\[H^{(2n,2n+1)}_{mLYA}(\phi, \psi):= \frac{Z^{(2n,2n+1)}_{mLYA}(\phi, \psi)}{B^{(2n,2n+1)}_{mLYA}(\phi, \psi)}~~,~\mbox{for}~~ n \geq 1,\]
are called the cohomology groups of the morphism of Lie-Yamaguti algebra $\phi : L_1 \rightarrow L_2$ with coefficients in the representation $(V,W,\psi).$
\section{Deformation of morphisms of Lie-Yamaguti algebra}\label{sec5}
In this section we develop one-parameter formal deformation theory of morphisms of Lie-Yamaguti algebra over a field $k$ of $Char(k)=0.$ Let $K=k[[t]]$ be the formal power series ring with coefficient in $k$. For a Lie-Yamaguti algebra $L$ define $L[[t]]:=L \otimes _{k}K$, then $L[[t]]$ is a module over $K.$

\begin{definition}
Let $\phi: L_1 \rightarrow L_2$ be a morphism of Lie Yamaguti algebra. A formal one parameter deformation of it is a order triple $((f_t,g_t),(f_t^{'},g_t^{'}),(\phi_t,\phi_t))$ , with the formal power series of the forms 
\begin{align*}
&f_t=[~,~]+\sum _{i \geqslant 1} f_i t^i ; f^{'}_t=[~,~]+\sum _{i \geqslant 1} f^{'}_i t^i \\
&g_t=\{~,~,~\}+\sum _{i \geqslant 1} g_i t^i ; g^{'}_t=\{~,~,~\}+\sum _{i \geqslant 1} g^{'}_i t^i \\
&\phi_t=\sum _{i=0}\phi_i t^{'}, \phi_0=\phi,
\end{align*}
 where for $i \geq 0$, $f_i : L_1 \times L_1 \rightarrow L_1$ and  $f^{'}_i : L_2 \times L_2 \rightarrow L_2$ are $k$- bilinear map (extended to be $K$- bilinear) and also, for $i \geq 0$, $g_i : L_1\times \times L_1 \rightarrow L_1$ and  $g^{'}_i : L_2 \times L_2 \times L_2 \rightarrow L_2$ are $k$- trilinear map (extended to be $K$- trilinear) such that the following conditions are satisfied
\begin{enumerate}
\item $(L_1[[t]],f_t,g_t)$ is a Lie-Yamaguti algebra.
\item $(L_2[[t]],f^{'}_t,g^{'}_t)$ is a Lie-Yamaguti algebra.
\item $\phi_t : L_1[[t]] \rightarrow L_2[[t]] $ is a morphism of Lie-Yamaguti algebra.
\end{enumerate}
\end{definition}
\begin{definition}
The $(2,3)$- cochain  $((f_1,g_1),(f_1^{'},g_1^{'}),(\phi_1,\phi_1))$ is called the infinitesimal of the deformation  $((f_t,g_t),(f_t^{'},g_t^{'}),(\phi_t,\phi_t))$. More generally  $((f_i,g_i),(f_i^{'},g_i^{'}),(\phi_i,\phi_i))=0$ for $1\leq i \leq n-1$ and  $((f_n,g_n),(f_n^{'},g_n^{'}),(\phi_n,\phi_n))$ is non zero, then  $((f_n,g_n),(f_n^{'},g_n^{'}),(\phi_n,\phi_n))$ is called the $n$- infinitesimal of the deformation  $((f_t,g_t),(f_t^{'},g_t^{'}),(\phi_t,\phi_t))$.
\end{definition}

Let $((f_t,g_t),(f_t^{'},g_t^{'}),(\phi_t,\phi_t))$ be a formal deformation of a morphism of Lie-Yamaguti algebra $\phi : L_1 \rightarrow L_2.$ 
 Since $(L_1[[t]],f_t,g_t)$ is a Lie Yamaguti algebra, we have 
\begin{align}\label{LYAD 1}
&f_t(x,x)=0, g_t(x,x,y)=0,\\
& \sum_{\circlearrowleft(x,y,z)} (f_t(f_t(x,y),z)+g_t(x,y,z))=0\\
&\sum _{\circlearrowleft (x,y,z,w)}g_t(f_t(x,y),z,w)=0\\
&g_t(x,y,f_t(z,w))=f_t(g_t(x,y,z),w)+f_t(z,g_t(x,y,z))\\
& g_t(x,y,g_t(z,w,u))=g_t(g_t(x,y,z),w,u)+g_t(z,g_t(x,y,w),u)+g_t(z,w,g_t(x,y,u))\label{LYAD 2}
\end{align}
for all $x,y,z,w,u \in L_1.$\\
Now since $f_t=\sum _{i \geq 0}f_it^i$ and  $g_t=\sum _{i \geq 0}g_it^i$, where $f_0=[~,~],~g_0=\{~,~,~\}$.
Thus, equating the coefficients of $t^n$, $n\geqslant 0$ from both sides of the Equations $(\ref{LYAD 1})-(\ref{LYAD 2})$ we get,
\begin{align}\nonumber
&f_n(x,x)=0\\\nonumber
& g_n(x,x,y)=0\\\label{LK1}
&\sum_{\substack{i+j=n\\i,j \geqslant 0}}\sum _{\circlearrowleft (x,y,z)} (f_i(f_i(x,y),z)+g_n(x,y,z))=0\\\label{LK2}
&\sum_{\substack{i+j=n\\i,j \geqslant 0}}\sum _{\circlearrowleft (x,y,z)}
(g_i(f_i(x,y),z,w))=0\\\label{LK3}
&\sum_{\substack{i+j=n\\i,j \geqslant 0}}(g_i(x,y,f_i(z,w))-f_i(g_i(x,y,z),w)-f_i(z,g_j(x,y,w)))=0\\\label{LK4}
& \sum _{\substack{i+j=n\\i,j \geqslant 0}}(g_i(x,y,g_j(z,w,u))-g_i(g_j(x,y,z),w,u)-g_i(z,g_i(x,y,w),u)-g_i(z,w,g_j(x,y,u)))=0
\end{align}
where $x,y,z,w,u \in L_1.$\\
Now putting $n=1$ in the Equations (\ref{LK3})-(\ref{LK4})we get
\begin{align*}
&\{x,y,f_1(u,v)\}+g_1(x,y,[u,v])-[g_1(x,y,u),v]\\
&-f_1(\{x,y,u\},v)-(z,g_1(x,y,z))-f_1(z,\{x,y,u\})=0;\\
&\mbox{and}\\
&\{x,y,g_1(u,v,w)\}+g_1(x,y,\{u,v,w\})-\{g_1(x,y,u),v,w\}\\
& -g_1(\{x,y,u\},v,w)-\{u,g_1(x,y,v),w\}-g_1(u,\{x,y,v\},w)\\
&-\{u,v,g_1(x,y,w)\}-g_1(u,v,\{x,y,w\})-\{u,v,g_1(x,y,w)\}=0.\\
\end{align*}
Therefore, we have $(\delta_I,\delta_{II})(f_1,g_1)=(0,0)$, i.e., $\delta^{'} (f_1,g_1)=0$

Now using the fact $(L_2[[t]],f^{'}_t,g^{'}_t)$ is a Lie-Yamaguti algebra we get similarly as above,
$(\delta_I,\delta_{II})(f_1^{'},g_1^{'})=(0,0)$, i.e., $\delta^{''} (f_1^{'},g_1^{'})=0.$

Now, since $\phi_t : L_1[[t]] \rightarrow L_2[[t]] $ is morphism of Lie-Yamaguti algebra, we have 
\begin{align}\label{LYAD 3}
&\phi_t(f_t(x,y))=f_t^{'}(\phi_t(x),\phi_t(y))\\ \label{LYAD 4}
&\phi_t (g_t(x,y,z))=g_t^{'}(\phi_t(x),\phi_t(y),\phi_t(z)) \\ \nonumber
\end{align}
for all $x,y,z \in L_1.$\\
Now the Equations (\ref{LYAD 3}) and (\ref{LYAD 4}) reduces to 
\begin{align*}
& \sum _{\substack{i+j=n\\i,j \geqslant 0}}\phi_i( f_j(x,y))=\sum_{\substack{i+j+k=n\\i,j,k \geq 0}}f_i^{'}(\phi_j(x), \phi_k(y))\\
& \sum_{\substack{i+j=n\\i,j \geqslant 0}}\phi_i (g_j(x,y,z))=\sum _{\substack{i+j+p+q=n\\i,j,p,q \geq 0}}g_i^{'}(\phi_j(x),\phi_p(y),\phi_q(z))
\end{align*}
Now putting $n=1$, the above equations reduces to 
\begin{align*}
\phi_1(f_0(x,y))+\phi(f_1(x,y))=f_1^{'}(\phi(x),\phi(y))+f_0^{'}(\phi_1(x),\phi(x))+f_0^{'}(\phi(x),\phi_1(y)).
\end{align*}
and
\begin{align*}
\phi_1(g_0(x,y,z))+\phi(g_1(x,y,z))&=g_1^{'}(\phi(x),\phi(y),\phi(z))+g_0^{'}(\phi_1(x),\phi(y),\phi(z))\\
&+g_0^{'}(\phi(x),\phi_1(y),\phi(z))+g_0^{'}(\phi(x),\phi(y),\phi_1(z)).
\end{align*}
Now, the above equation reduces to\\
$\phi \circ f_1-f_1^{'}\phi-\delta_I(\phi_1)=0$\\
$ ~ \mbox{and}~\\
 \phi \circ g_1-g_1^{'}\phi-\delta_{II}(\phi_1)=0.$\\
Hence, $\phi \circ (f_1,g_1)-(f_1^{'},g_1^{'})\phi-\delta(\phi_1)=0$.\\
Thus, we have $\delta(f_1,g_1)=0$, $\delta(f_1^{'},g_1^{'})=0$ and $\phi \circ (f_1,g_1)-(f_1^{'},g_1^{'})\phi-\delta(\phi_1)=0.$\\
Hence, $d ((f_1,g_1),(f_1^{'},g_1^{'}),(\phi_1,\phi_1))=0.$\\
Therefore, $((f_1,g_1),(f_1^{'},g_1^{'}),(\phi_1,\phi_1)) \in Z^{(2,3)}_{mLYA}(\phi,\phi).$

 \begin{definition}
 Let $((f_t,g_t),(f_t^{'},g_t^{'}),(\phi_t,\phi_t))$ and $((F_t,G_t),(F_t^{'},G_t^{'}),(\Phi_t,\Phi_t))$ be two one parameter formal deformation of a morphism of Lie-Yamaguti algebra $\phi: L_1 \rightarrow L_2$. Then this two one-parameter deformation is called equivalent if there exists a formal isomorphisms $(\psi_t,\psi^{'}_t)$ such that
 $\psi_t(x) : (L_1[[t]],f_t,g_t) \rightarrow (L_1[[t]], F_t,G_t)$ and $\psi^{'}_t(x): (L_2[[t]],f^{'}_t,g^{'}_t )\rightarrow (L_2[[t]],F^{'}_t,G^{'}_t)$, where $\psi_i : L_1 \rightarrow L_1$ and $\psi^{'}: L_2 \rightarrow L_2$ are two $k-$ linear map (extended to be $k[[t]]$ linear )
 such that 
 $\Phi_t=\psi_t^{'}\circ \phi_t \circ \psi_t^{-1}.$

 \end{definition}
\begin{definition}
Any deformation of $\phi:L_1 \rightarrow L_2$, which is equivalent to $((f_0,g_0),(f_0^{'},g_0^{'}),(\phi,\phi))$ is called a trivial deformation, where $f_0,g_0$ are the brackets of $L_1$ and $f_0^{'},g_0^{'}$ are the brackets of $L_2.$
\end{definition} 
 
\begin{theorem}
Let  $((f_t,g_t),(f_t^{'},g_t^{'}),(\phi_t,\phi_t))$ and $((F_t,G_t),(F_t^{'},G_t^{'}),(\Phi_t,\Phi_t))$ be two equivalent one parameter formal deformation of morphism of Lie-Yamaguti algebra $\phi : L_1 \rightarrow L_2$. Then $((f_t,g_t),(f_t^{'},g_t^{'}),(\phi_t,\phi_t))$ and $((F_t,G_t),(F_t^{'},G_t^{'}),(\Phi_t,\Phi_t))$ are belongs to the same cohomology class in $H^{(2,3)}_{mLYA}(\phi,\phi).$
\end{theorem}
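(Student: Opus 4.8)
The plan is to expand the three defining conditions of an equivalence to first order in $t$ and to recognise the resulting three identities as the three components of a single coboundary $\delta_{mLYA}(\psi_1,\psi_1')$ of a degree-zero cochain. Recall that in the present situation the representation of $\phi$ is $(L_1,L_2,\phi)$, so $V=L_1$, $W=L_2$, $\psi=\phi$, and both $L_1,L_2$ carry their adjoint-type actions $\rho(x)z=[x,z]$, $D(x,y)z=\{x,y,z\}$, $\theta(x,y)z=\{z,x,y\}$. The relevant predecessor of $C^{(2,3)}_{mLYA}(\phi,\phi)$ is the degree-zero group $C(L_1,V)\oplus C(L_2,W)$ (its third slot being trivial), with coboundary $\delta_{mLYA}(p,q)=(\delta'(p),\,\delta''(q),\,\phi\circ p-q\circ\phi)$, the natural degree-zero instance of the morphism coboundary whose image is $B^{(2,3)}_{mLYA}(\phi,\phi)$.

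First I would write out the equivalence. The isomorphisms have the form $\psi_t=\mathrm{id}_{L_1}+\sum_{i\geq1}\psi_i t^i$ and $\psi_t'=\mathrm{id}_{L_2}+\sum_{i\geq1}\psi_i' t^i$, and they satisfy $\psi_t(f_t(x,y))=F_t(\psi_t(x),\psi_t(y))$, $\psi_t(g_t(x,y,z))=G_t(\psi_t(x),\psi_t(y),\psi_t(z))$, the analogous two identities for $\psi_t'$ on $L_2$, and $\Phi_t=\psi_t'\circ\phi_t\circ\psi_t^{-1}$. Since $\psi_0=\mathrm{id}$ we have $\psi_t^{-1}=\mathrm{id}_{L_1}-\psi_1 t+O(t^2)$.

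Next I would extract the coefficient of $t$. For the bilinear $L_1$-identity, using $f_0=F_0=[\,,\,]$, this gives $f_1(x,y)-F_1(x,y)=[\psi_1(x),y]+[x,\psi_1(y)]-\psi_1([x,y])$, which rewrites via antisymmetry of $[\,,\,]$ as $\rho(x)\psi_1(y)-\rho(y)\psi_1(x)-\psi_1([x,y])=\delta_I'\psi_1(x,y)$. The trilinear identity gives $g_1(x,y,z)-G_1(x,y,z)=\{\psi_1(x),y,z\}+\{x,\psi_1(y),z\}+\{x,y,\psi_1(z)\}-\psi_1(\{x,y,z\})$, which, after applying $\{a,b,c\}=-\{b,a,c\}$ to the middle term, is exactly $\delta_{II}'\psi_1(x,y,z)$. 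Hence $(f_1,g_1)-(F_1,G_1)=\delta'(\psi_1)$, and the identical computation on $L_2$ yields $(f_1',g_1')-(F_1',G_1')=\delta''(\psi_1')$. Finally, expanding $\Phi_t=\psi_t'\circ\phi_t\circ\psi_t^{-1}$ to first order (with $\phi_0=\phi$) gives $\Phi_1=\phi_1-\phi\circ\psi_1+\psi_1'\circ\phi$, i.e. $\phi_1-\Phi_1=\phi\circ\psi_1-\psi_1'\circ\phi$.

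Assembling the three relations, the pair $(\psi_1,\psi_1')\in C(L_1,V)\oplus C(L_2,W)$ satisfies
$$\big((f_1,g_1),(f_1',g_1'),(\phi_1,\phi_1)\big)-\big((F_1,G_1),(F_1',G_1'),(\Phi_1,\Phi_1)\big)=\delta_{mLYA}(\psi_1,\psi_1'),$$
which lies in $B^{(2,3)}_{mLYA}(\phi,\phi)$; since both infinitesimals were already shown to be cocycles, they represent the same class in $H^{(2,3)}_{mLYA}(\phi,\phi)$. The main obstacle is the first-order bookkeeping: matching the trilinear coefficient to $\delta_{II}'$ requires the bracket's antisymmetry in its first two slots, and obtaining the exact sign $\phi\circ\psi_1-\psi_1'\circ\phi$ in the morphism slot hinges on the $-\psi_1 t$ term of the inverse series $\psi_t^{-1}$.
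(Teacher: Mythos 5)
Your proposal is correct and follows essentially the same route as the paper's proof: expand the equivalence conditions $\psi_t\circ f_t=F_t\circ(\psi_t\times\psi_t)$ (and their trilinear and $L_2$ analogues) to first order to get $(f_1,g_1)-(F_1,G_1)=\delta'(\psi_1)$ and $(f_1',g_1')-(F_1',G_1')=\delta''(\psi_1')$, extract the morphism relation $\phi_1-\Phi_1=\phi\circ\psi_1-\psi_1'\circ\phi$, and assemble the difference of infinitesimals as the coboundary $\delta_{mLYA}\bigl((\psi_1,\psi_1),(\psi_1',\psi_1'),(0,0)\bigr)$. The only cosmetic difference is that you expand $\Phi_t=\psi_t'\circ\phi_t\circ\psi_t^{-1}$ via the inverse series while the paper equates coefficients in $\Phi_t\circ\psi_t=\psi_t'\circ\phi_t$, and you make explicit the degree-zero cochain group and its coboundary, which the paper uses implicitly.
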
 
 \begin{proof}
 Since $(\hat{f_t},\hat{g_t},\phi_t)$ and $(\hat{F_t},\hat{G_t},\Phi_t)$ are two equivalent deformation, there exists isomorphisms say $\psi_t: (L_1[[t]],f_t,g_t) \rightarrow (L_1[[t]],F_t,G_t)$ and $\psi'_t: (L_2[[t]],f_t',g_t') \rightarrow (L_2[[t]],F_t',G_t')$.

Therefore,

\begin{align}\label{Equivalence 1}
\psi_t \circ f_t (x,y)=F_t(\psi_t(x),\psi_t(y)), \psi_t \circ g_t (x,y,z)=G_t(\psi_t(x),\psi_t(y),\psi_t(z))
\end{align}
 for all $x,y,z \in L_1$ with  $\psi_0=id_{L_1}$ and 
 \begin{align}\label{Equivalence 2}
 \psi^{'}_t \circ f^{'}_t (x,y)=F^{'}_t(\psi^{'}_t(x),\psi^{'}_t(y)), \psi^{'}_t \circ g^{'}_t (x,y,z)=G^{'}_t(\psi^{'}_t(x),\psi^{'}_t(y),\psi^{'}_t(z))
 \end{align}
 for all $x,y,z \in L_2$ with $\psi^{'}_0=id_{L_2}.$

From the Equation (\ref{Equivalence 1}), we have
\[\sum _{i \geq 0}\psi_i (\sum_{j \geq 0} f_j(x,y)t^j)t^i= \sum _{i \geq 0}F_i(\sum _{p \geq 0}\psi_p(x)t^p,\sum_{q\geq 0} \psi_q(y)t^q)t^i\]
and

\[\sum_{i \geq 0} \psi_i (\sum_{j \geq 0} g_j(x,y,z)t^j)t^i= \sum _{i\geq 0} G_i(\sum_{p \geq 0} \psi_(x)t^,\sum_{q \geq 0} \psi_q(y)t^q,\sum _{l \geq 0}\psi_l(z)t^l)t^i\]
 Comparing the coefficients of $t$ from both sides of the above equation we get
\[ f_1(x,y)+\psi_1([x,y])=F_1(x,y)+[\psi_1(x),y]+[x,\psi_1(y)]\]
i.e.,
\[f_1(x,y)-F_1(x,y)=\delta_I\psi_1(x,y)\] 
 and
 \[g_1(x,y,z)+\psi_1(\{x,y,z\})=G_1(x,y,z)+\{\psi_1(x),y,z\}+\{x,\psi_1(y),z\}+\{x,y,\psi_1(z)\}\]
 i.e., \[g_1(x,y,z)-G_1(x,y,z)=\delta_{II}\psi_1(x,y,z).\]
 Therefore, $(f_1-F_1,g_1-G_1)=\delta(\psi_1,\psi_1).$
 Similarly, as above from  the Equation (\ref{Equivalence 2}), we get
  $(f_1^{'}-F_1^{'},g_1^{'}-G_1^{'})=\delta(\psi_1^{'},\psi_1^{'}).$

Now, we have $\Phi_t \circ \psi_t= \psi_t^{'} \circ \phi_t.$
 Expanding and equating the coefficients of $t^n$ from both sides of the above equation we get,
\begin{align*}
\sum_{\substack{i+j=n\\i,j \geqslant 0}} \Phi_i \circ \psi_j=\sum_{\substack{i+j=n\\i,j \geqslant 0}}\psi_i^{'}\circ \phi_j.
\end{align*}
 
Now putting $n=1$ in the above equation we get
\begin{align*}
&\Phi_0 \circ \psi_1+\Phi_1 \circ \psi_0=\psi_0^{'} \circ \phi_1+\psi_1^{'}\circ \phi_0\\
&\phi_1-\Phi_1=\Phi \circ \psi_1 -\psi_1^{'} \circ \phi
\end{align*}
 
 Now,
 \begin{align*}
 &((f_1,g_1),(f_1^{'},g_1^{'}),(\phi_1,\phi_1))-((F_1,G_1),(F_1^{'},G_1^{'}),(\Phi_1,\Phi_1))\\
 &=((f_1-G_1,g_1-F_1),(f_1^{'}-F_1^{'},g_1^{'}-G_1^{'}),(\phi_1-\Phi_1,\phi_1-\Phi_1))\\
 &=(\delta(\psi_1,\psi_1),\delta(\psi_1^{'},\psi_1^{'}),(\Phi \circ \psi_1 -\psi_1^{'} \circ \phi,\Phi \circ \psi_1 -\psi_1^{'} \circ \phi))\\
 &=d((\psi_1,\psi_1),(\psi_1^{'},\psi_1^{'}),(0,0))
 \end{align*}
 Hence, $(f_1,g_1),(f_1^{'},g_1^{'}),(\phi_1,\phi_1))$ and $((F_1,G_1),(F_1^{'},G_1^{'}),(\Phi_1,\Phi_1)$ are cohomologous and belongs to the same cohomology class  $B^{(2,3)}_{mLYA}(\phi,\phi).$

 \end{proof}

 \begin{theorem}
 Let $ ((f_t,g_t),(f_t',g_t'),(\phi_t,\phi_t))$ be a formal one parameter deformation of a morphism of Lie-Yamaguti algebra $\phi : L_1 \rightarrow L_2$, with infinitesimal $((f_1,g_1),(f_1^{'},g_1^{'}),(\phi_1,\phi_1))$. Let $((F_1,G_1),(F_1^{'},G_1^{'}),(\Phi_1,\Phi_1))$ be a $(2,3)-$ cocyle cohomologous to $((f_1,g_1),(f_1^{'},g_1^{'}),(\phi_1,\phi_1))$, then there exists a deformation equivalent to $ ((f_t,g_t),(f_t^{'},g_t^{'}),(\phi_t,\phi_t))$ with infinitesimal $((F_1,G_1),(F_1^{'},G_1^{'}),(\Phi_1,\Phi_1))$.
 \end{theorem}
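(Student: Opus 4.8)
The plan is to reverse the construction of the previous theorem: there I read a coboundary off an equivalence, and here I will build an equivalence out of a given coboundary. Because the two infinitesimal cocycles are cohomologous, there exist linear maps $\psi_1 : L_1 \rightarrow L_1$ and $\psi_1' : L_2 \rightarrow L_2$ witnessing the coboundary, that is,
$$f_1 - F_1 = \delta_I \psi_1, \quad g_1 - G_1 = \delta_{II}\psi_1, \quad f_1' - F_1' = \delta_I \psi_1', \quad g_1' - G_1' = \delta_{II}\psi_1',$$
together with the morphism-part relation $\phi_1 - \Phi_1 = \phi \circ \psi_1 - \psi_1' \circ \phi$. These are exactly the equations produced in the previous theorem, now taken as the input data.

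Second, I would set $\psi_t := id_{L_1} + \psi_1 t$ and $\psi_t' := id_{L_2} + \psi_1' t$, extended $k[[t]]$-linearly. Since the leading term is the identity, both are formal automorphisms of the $K$-modules $L_1[[t]]$ and $L_2[[t]]$, with inverses $\psi_t^{-1} = id - \psi_1 t + \cdots$ obtained as the usual geometric series. I then define the candidate deformation by transporting the given structure through these isomorphisms:
$$F_t(u,v) := \psi_t\big(f_t(\psi_t^{-1}u, \psi_t^{-1}v)\big), \qquad G_t(u,v,w) := \psi_t\big(g_t(\psi_t^{-1}u, \psi_t^{-1}v, \psi_t^{-1}w)\big),$$
analogously $F_t', G_t'$ using $\psi_t'$, and finally $\Phi_t := \psi_t' \circ \phi_t \circ \psi_t^{-1}$. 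Note the $t^0$-coefficients of $F_t, G_t, F_t', G_t', \Phi_t$ are the original brackets of $L_1, L_2$ and the morphism $\phi$, so this is a deformation of the same $\phi$.

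Third, I must check that this tuple is a genuine deformation of $\phi$ and that $(\psi_t,\psi_t')$ realizes an equivalence. The conjugated operations $(F_t, G_t)$ automatically satisfy the Lie-Yamaguti axioms because $\psi_t$ is a $K$-module isomorphism intertwining $(f_t, g_t)$ with $(F_t, G_t)$ by construction, and likewise for $(F_t', G_t')$ via $\psi_t'$. The morphism condition for $\Phi_t$, namely $\Phi_t \circ F_t = F_t' \circ (\Phi_t \times \Phi_t)$ and its ternary analogue, follows from $\phi_t$ being a morphism simply by substituting the definitions and cancelling the conjugating isomorphisms. Since $\Phi_t = \psi_t' \circ \phi_t \circ \psi_t^{-1}$ holds by definition, the new deformation is equivalent to the original one.

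Finally, I would extract the order-$t$ coefficients. A first-order expansion of $F_t$ gives $F_1 = f_1 - \delta_I \psi_1$, which is the prescribed $F_1$ by the coboundary relation above; the same computation yields $G_1, F_1', G_1'$ correctly. Expanding $\Phi_t = \psi_t' \circ \phi_t \circ \psi_t^{-1}$ to first order yields $\Phi_1 = \phi_1 - (\phi \circ \psi_1 - \psi_1' \circ \phi)$, which matches the prescribed $\Phi_1$. I expect the only mild obstacle to be the bookkeeping in these first-order expansions, in particular tracking where $\psi_1$ and $\psi_1'$ land through the nested $\psi_t^{-1}$; there is no genuine difficulty, since the argument is a formal transport of structure and the coboundary hypothesis was exactly tailored in the previous theorem to produce the required shift of infinitesimal.
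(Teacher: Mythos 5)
Your proposal is correct and follows essentially the same route as the paper: read off $\psi_1,\psi_1'$ from the coboundary relation, form the formal isomorphisms $\mathrm{id}+\psi_1 t$ and $\mathrm{id}+\psi_1' t$, transport $(f_t,g_t)$, $(f_t',g_t')$, $\phi_t$ through them, and match first-order coefficients. The only deviation is cosmetic: you conjugate by $\psi_t$ where the paper conjugates by $\psi_t^{-1}$ (pushforward versus pullback), and your signs $F_1=f_1-\delta_I\psi_1$, $\Phi_1=\phi_1-(\phi\circ\psi_1-\psi_1'\circ\phi)$ are adjusted consistently with that choice, so the argument goes through unchanged.
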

\begin{proof}
Since $((f_1,g_1),(f_1^{'},g_1^{'}),(\phi_1,\phi_1))$ and $((F_1,G_1),(F_1^{'},G_1^{'}),(\Phi_1,\Phi_1))$ are cohomologous, we have
\begin{align*}
((F_1,G_1),(F_1^{'},G_1^{'}),(\Phi_1,\Phi_1))=((f_1,g_1),(f_1^{'},g_1^{'}),(\phi_1,\phi_1))+(\delta(\lambda_1,\lambda_1)+\delta(\lambda_2,\lambda_2),(m,m))
\end{align*}
where $\lambda_1 \in C^1(L_1,L_1)$, $\lambda_2 \in C^1(L_2,L_2)$ and $(m,m)\in Z(L_1,L_2)$.\\
Let $\psi_t: L_1[[t]] \rightarrow L_1[[t]]$ and $\psi_t^{'}: L_2[[t]] \rightarrow L_2[[t]]$ be the formal isomorphisms obtained by extending linearly the map $\psi_t(a)=a+\lambda_1(a)t$, $a\in L_1$ to $L_1[[t]]$
and $\psi_t^{'}(a)=b+\lambda_2(b)t$, $b\in L_2$ to $L_2[[t]]$ respectively.
Then, the inverse maps are given by
\begin{align*}
& \psi_t^{-1}(a)=a+\sum_{i \geq 0}(-1)^i\lambda_1^{i}(a)t^i,~ a\in L_1\\
&{\psi_t^{'}}^{-1}(b)=b+\sum_{i \geq 0}(-1)^i\lambda_2^{i}(b)t^i,~b\in L_2.
\end{align*}
Now define the maps $(F_t,G_t)$, $(f_t^{'},G_t^{'})$ and $\Phi_t$ by
\begin{align}\label{LYADE 1}
&F_t(x,y)=\psi_t^{-1}(f_t(\psi_t(x),\psi_t(y))),~ x,y \in L_1\\\label{LYADE 2}
&G_t(x,y,z)=\psi_t^{-1}(g_t(\psi_t(x),\psi_t(y),\psi_t(z))),~x,y,z \in L_1\\\label{LYADE 3}
&F_t^{'}(x,y)={\psi_t^{'}}^{-1}(f_t^{'}(\psi_t^{'}(x),\psi_t^{'}(y))),~ x,y \in L_2\\\label{LYADE 4}
&G_t^{'}(x,y,z)={\psi_t^{'}}^{-1}(g_t^{'}(\psi_t^{'}(x),\psi_t^{'}(y),\psi_t^{'}(z))),~x,y,z \in L_2\\\label{LYADE 5}
&\Phi_t=\psi_t^{'} \circ \phi_t \circ \psi_t^{-1}.
\end{align}
Note that $((F_t,G_t),(F_t^{'},G_t^{'}),(\Phi_t,\Phi_t))$ is equivalent to $((f_t,g_t),(f_t^{'},g_t^{'}),(\phi_t,\phi_t))$.

Now from the Equation (\ref{LYADE 1}) we get the expression of $F_t(x,y)$ as

 \[\psi_t^{-1}(f_t(\psi_t(x),\psi_t(y)))=\psi_t^{-1}(\sum_{i \geq 0}(f_i(x+\lambda_1(x)t, y+\lambda_1(y)t)t^i),~ x,y \in L_1.\]

Expanding the above equation we get the coefficient of $t$ in $F_t(x,y)$ as
\[f_1(x,y)+[\lambda_1(x),y]+[x,\lambda_1(y)]-\lambda_1([x,y])=f_1(x,y)+\delta_I \lambda_1(x,y)=F_1(x,y)\]
i.e., $F_1-f_1=\delta_I\lambda_1.$

Again, from the Equation ({\ref{LYADE 2}}) we get the expression of $G_t(x,y,z)$ as
\[\psi_t^{-1}(g_t(\psi_t(x),\psi_t(y),\psi_t(z)))=\psi_t^{-1}(\sum_{i \geq 0}(g_i(x+\lambda_1(x)t, y+\lambda_1(y)t,z+\lambda_1(z))t^i), ~ x,y,z \in L_2.\]

Expanding the above equation we get the coefficient of $t$ in $G_t(x,y,z)$ as
\begin{align*}
&g_1(x,y,z)+\{\lambda_1(x),y,z\}+\{x,\lambda(y),z\}+\{x,y,\lambda_1(z)\}-\lambda_1(\{x,y,z\})\\
&=g_1(x,y,z)+\delta_{II}\lambda_1(x,y,z)=G_1(x,y,z)
\end{align*}
i.e., $G_1-g_1=\delta_{II}\lambda_1.$\\
Similarly from the Equations (\ref{LYADE 3}), (\ref{LYADE 4}) we get, $F_1^{'}-f_1^{'}=\delta_I\lambda_2, $ and $G_1^{'}-g_1^{'}=\delta_{II}\lambda_2.$

Again, from the  Equation (\ref{LYADE 5}) we get,
\begin{align*}
&\Phi_t(a)=\psi_t^{'}(\phi_t(a+\sum_{i\geq 0}(-1)^i \lambda_1^{i}(a)t^i))\\
&=\phi_t(a+\sum_{i\geq 0}(-1)^i \lambda_1^{i}(a)t^i)+\lambda_2(a+\sum_{i\geq 0}(-1)^i \lambda_1^{i}(a)t^i)t,~~a\in L_1.
\end{align*}
From the above equation we get the coefficient of $t$ in $\Phi_t$ as\\
$\phi_1 -\phi \circ \lambda_1+ \lambda_2 \phi=\Phi_1$ i.e., $\Phi_1-\phi_1=\lambda_2\phi -\phi \circ \lambda_1.$\\
Now as $d((\delta_I\lambda_1,\delta_{II}\lambda_1),(\delta_I \lambda_2,\delta_{II}\lambda_2),(\lambda_2\phi -\phi \circ \lambda_1,\lambda_2\phi -\phi \circ \lambda_1))=0$.\\
 Hence, 
$((F_1,G_1),(F_1^{'},G_1^{'}),(\Phi_1,\Phi_1))$ is the infinitesimal of $((F_t,G_t),(F_t^{'},G_t^{'}),(\Phi_t,\Phi_t))$.

\end{proof}
\begin{definition}
A Lie-Yamaguti algebra morphism $\phi:L_1 \rightarrow L_2$ is said to be rigid if and only if every deformation of $\phi$ is trivial.
\end{definition}

\begin{theorem}
A non-trivial deformation of a morphism of Lie-Yamaguti algebra is equivalent to a deformation whose $n$-infinitesimal is not a coboundary for some $n\geq 1.$
\end{theorem}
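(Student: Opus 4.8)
The plan is to mimic the classical Gerstenhaber argument for rigidity and nontriviality of deformations, adapted to the present morphism setting. I would start from a nontrivial formal deformation $((f_t,g_t),(f_t',g_t'),(\phi_t,\phi_t))$ of $\phi:L_1\to L_2$ and assume, toward building the claim, that its lowest nonvanishing infinitesimal is a coboundary; the goal is to produce an equivalent deformation in which this lowest term has been killed, thereby pushing the first nonzero infinitesimal to a strictly higher order. Iterating, one either terminates (in which case the deformation is trivial, contradicting nontriviality) or arrives at a deformation whose $n$-infinitesimal is a genuine cocycle that is not a coboundary for some $n\geq 1$, which is exactly the assertion.

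The key steps, in order, are as follows. First I would invoke the earlier computation showing that for any formal deformation the degree-one term lies in $Z^{(2,3)}_{mLYA}(\phi,\phi)$; the same reasoning, applied to the lowest surviving term, shows that the $n$-infinitesimal $((f_n,g_n),(f_n',g_n'),(\phi_n,\phi_n))$ is a cocycle in $Z^{(2,3)}_{mLYA}(\phi,\phi)$ whenever all lower terms vanish. Second, suppose this $n$-infinitesimal is a coboundary, say equal to $d((\lambda_1,\lambda_1),(\lambda_2,\lambda_2),(0,0))$ for suitable $\lambda_1\in C^1(L_1,L_1)$ and $\lambda_2\in C^1(L_2,L_2)$, mirroring the coboundary description used in the preceding theorem. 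Third, I would define formal isomorphisms $\psi_t(a)=a+\lambda_1(a)t^n$ on $L_1[[t]]$ and $\psi_t'(b)=b+\lambda_2(b)t^n$ on $L_2[[t]]$ (note the power $t^n$, not $t$), with inverses expanded as geometric series exactly as in the previous proof, and use them to transport the deformation via $F_t=\psi_t^{-1}\circ f_t\circ(\psi_t\times\psi_t)$, and analogously for $G_t,F_t',G_t'$, together with $\Phi_t=\psi_t'\circ\phi_t\circ\psi_t^{-1}$. Fourth, I would expand these transported structure maps and compare coefficients: by the coboundary hypothesis the resulting degree-$n$ terms cancel, so the equivalent deformation $((F_t,G_t),(F_t',G_t'),(\Phi_t,\Phi_t))$ has vanishing $i$-infinitesimals for all $i\leq n$, raising the order of the first nonzero infinitesimal.

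The main obstacle will be the bookkeeping in the fourth step: one must verify carefully that using $t^n$ (rather than $t$) in the equivalence isomorphisms leaves every infinitesimal of order strictly less than $n$ unchanged, and that the new order-$n$ term is precisely the old one minus $d((\lambda_1,\lambda_1),(\lambda_2,\lambda_2),(0,0))$, hence zero. This requires tracking the three components simultaneously---the $L_1$-part, the $L_2$-part, and the morphism part---and confirming that the $t^n$-coefficients of $F_t,G_t,F_t',G_t'$ reproduce the Lie-Yamaguti coboundaries $\delta_I\lambda_1,\delta_{II}\lambda_1,\delta_I\lambda_2,\delta_{II}\lambda_2$, while the $t^n$-coefficient of $\Phi_t$ reproduces $\lambda_2\phi-\phi\circ\lambda_1$, exactly paralleling the degree-one computation already carried out. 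Finally I would close the argument by the iteration principle: if at every stage the surviving lowest infinitesimal were a coboundary, repeated application of this equivalence would remove all terms and render the deformation trivial, contradicting the nontriviality hypothesis; therefore some equivalent deformation must have an $n$-infinitesimal that is not a coboundary, as claimed.
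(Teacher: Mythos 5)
Your proposal is correct and follows essentially the same route as the paper: observe the lowest surviving infinitesimal is a cocycle, assume it is a coboundary, build the formal isomorphisms $\psi_t=\mathrm{id}+\lambda_1 t^n$ and $\psi_t'=\mathrm{id}+\lambda_2 t^n$, transport the deformation by conjugation to kill all terms up to order $n$, and iterate. The only detail the paper makes explicit that you gloss over is the reduction to a cobounding cochain with vanishing morphism component, which the paper justifies by absorbing $(m,m)$ via $d((\psi,\psi),(\psi',\psi'),(m,m))=d((\psi,\psi),(\psi',\psi')+\delta(m,m),(0,0))$.
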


\begin{proof}
Let $((f_t,g_t),(f_t^{'},g_t^{'}),(\phi_t,\phi_t))$ be a deformation of $\phi: L_1 \rightarrow L_2$ with $n$-infinitesimal $((f_n,g_n),(f_n^{'},g_n^{'}),(\phi_n,\phi_n))$ for some $n\geq 1.$ Assume that there exists a $(2,3)$- cochain $((\psi,\psi),(\psi^{'},\psi^{'}),(m,m)) \in C^{(2,3)}_{mLYA}(\phi,\phi)$ with \\
 $d((\psi,\psi),(\psi^{'},\psi^{'}),(m,m))=((f_n,g_n),(f_n^{'},g_n^{'}),(\phi_n,\phi_n)).$

Now we may assume that $(m,m)=(0,0)$ as $d((\psi,\psi),(\psi^{'},\psi^{'}),(m,m))=d((\psi,\psi),(\psi^{'},\psi^{'})+\delta(m,m),(0,0)).$

Therefore, we have
 \begin{align*}
&\delta_I(\psi)=f_n ~;~\delta_{II}(\psi)=g_n ,\\
&\delta_I(\psi^{'})=f_n^{'} ~;~\delta_{II}(\psi^{'})=g_n^{'}  ~ \mbox{and}~\\
& \phi_n=\phi \circ \psi-\psi^{'}\phi.
\end{align*}
let $\psi_t : L_1[[t]] \rightarrow L_1[[t]]$ be formal isomorphism defined by $\psi_t =id_{L_1}+\psi t^{n}$ \\
and\\
 $\psi_t^{'} : L_2[[t]] \rightarrow L_2[[t]]$ be formal isomorphism defined by $\psi_t^{'} = id_{L_2}+\psi^{'} t^{n}.$
Now, we have a deformation $((F_t,G_t),(F_t^{'},G_t^{'}),(\Phi_t,\Phi_t))$ where 

\begin{align}\label{LYADEN 1}
&F_t(x,y)=\psi_t^{-1}(f_t(\psi_t(x),\psi_t(y))),~ x,y \in L_1\\\label{LYADEN 2}
&G_t(x,y,z)=\psi_t^{-1}(g_t(\psi_t(x),\psi_t(y),\psi_t(z))),~x,y,z \in L_1\\\label{LYADEN 3}
&F_t^{'}(x,y)={\psi_t^{'}}^{-1}(f_t^{'}(\psi_t^{'}(x),\psi_t^{'}(y))),~ x,y \in L_2\\\label{LYADEN 4}
&G_t^{'}(x,y,z)={\psi_t^{'}}^{-1}(g_t^{'}(\psi_t^{'}(x),\psi_t^{'}(y),\psi_t^{'}(z))),~x,y,z \in L_2\\
\label{LYADEN 5}
&\Phi_t=\psi_t^{'} \circ \phi_t \circ \psi_t^{-1}.
\end{align}

Expanding both sides of the Equation (\ref{LYADEN 1}) and equating the coefficients of $t^i,~ i \leq n$ we get $F_i=0,~ 1\leq i \leq n-1$ and $F_n=f_n-\delta_I(\psi)=0.$\\ 
Similarly using the Equations (\ref{LYADEN 2})-(\ref{LYADEN 5}) we can show that $G_i=0,~F_i^{'}=0,~G_i^{'}=0,~\Phi_i=0$ for all $1 \leq i \leq n.$ Therefore, the given deformation $((f_t,g_t),(f_t^{'},g_t^{'}),(\phi_t,\phi_t))$ is equivalent to $((F_t,G_t),(F_t^{'},G_t^{'}),(\Phi_t,\Phi_t))$ with 
$((F_i,G_i),(F_i^{'},G_i^{'}),(\Phi_i,\Phi_i))=0$ for $1 \leq i \leq n.$ Hence, we can repeat the argument to kill off any infinitesimal which is coboundary, so the process must stop if the deformation is non trivial.

\end{proof}

\begin{cor}
If $H^{(2,3)}_{mLYA}(\phi,\phi)=0$, then $\phi: L_1 \rightarrow L_2$ is rigid.
\end{cor}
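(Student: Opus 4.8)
The plan is to prove the contrapositive-free, direct implication: if $H^{(2,3)}_{mLYA}(\phi,\phi)=0$, then every formal deformation of $\phi$ is trivial. The key leverage is the immediately preceding theorem, which states that a non-trivial deformation is equivalent to one whose $n$-infinitesimal is \emph{not} a coboundary for some $n\geq 1$. So the strategy is to combine that theorem with the vanishing hypothesis and the basic fact (established in Section \ref{sec5}) that every infinitesimal is automatically a cocycle.

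First I would recall that for any formal deformation $((f_t,g_t),(f_t^{'},g_t^{'}),(\phi_t,\phi_t))$, the computation carried out in Section \ref{sec5} shows that the first-order term $((f_1,g_1),(f_1^{'},g_1^{'}),(\phi_1,\phi_1))$ lies in $Z^{(2,3)}_{mLYA}(\phi,\phi)$; the same argument applied to the $n$-infinitesimal shows that whenever $((f_i,g_i),\dots)=0$ for $1\leq i\leq n-1$, the $n$-infinitesimal $((f_n,g_n),(f_n^{'},g_n^{'}),(\phi_n,\phi_n))$ is again a $(2,3)$-cocycle. I would state this cocycle property explicitly as the starting point.

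Next I would argue by contradiction. Suppose $\phi$ admits a non-trivial deformation. By the preceding theorem, after passing to an equivalent deformation we may assume its $n$-infinitesimal is not a coboundary for some $n\geq 1$. But this $n$-infinitesimal is a cocycle by the previous step, hence represents a class in $Z^{(2,3)}_{mLYA}(\phi,\phi)/B^{(2,3)}_{mLYA}(\phi,\phi)=H^{(2,3)}_{mLYA}(\phi,\phi)$. Since $H^{(2,3)}_{mLYA}(\phi,\phi)=0$ by hypothesis, every cocycle is a coboundary, so the $n$-infinitesimal \emph{is} a coboundary, contradicting its choice. Therefore no non-trivial deformation exists, i.e.\ $\phi$ is rigid.

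The only step requiring genuine care is the verification that the $n$-infinitesimal (not merely the first infinitesimal) is a cocycle under the assumption that the lower-order terms vanish; this is the main obstacle, though a mild one. It follows by extracting the coefficient of $t^n$ from the deformation equations $(\ref{LK1})$--$(\ref{LK4})$ together with the morphism compatibility equations $(\ref{LYAD 3})$--$(\ref{LYAD 4})$: when $f_i,g_i,f_i',g_i',\phi_i$ vanish for $0<i<n$, all cross terms drop out and the surviving identity is exactly $\delta^{'}(f_n,g_n)=0$, $\delta^{''}(f_n^{'},g_n^{'})=0$, and $\phi\circ(f_n,g_n)-(f_n^{'},g_n^{'})\phi-\delta(\phi_n)=0$, which together say $d((f_n,g_n),(f_n^{'},g_n^{'}),(\phi_n,\phi_n))=0$. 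Everything else is a formal consequence of the two cited theorems, so the corollary follows directly.
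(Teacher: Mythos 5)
Your proposal is correct and follows exactly the route the paper intends: the corollary is stated without proof precisely because it is the immediate deduction you give, combining the preceding theorem with the vanishing of $H^{(2,3)}_{mLYA}(\phi,\phi)$. Your extra verification that the $n$-infinitesimal is a cocycle (by extracting the coefficient of $t^n$ from the deformation and morphism equations when the lower-order terms vanish, so that all cross terms drop out) is a correct and welcome filling-in of a step the paper only carries out explicitly for $n=1$.
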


\section{Abelian extension of morphisms of Lie-Yamaguti algebras}\label{sec6}
In this section we discuss the abelian extensions of morphism of Lie-Yamaguti algebras using the theory of abelian extension of Lie-Yamaguti algebra (\cite{ZL}) and motivated by the abelian extensions of morphisms of Lie algebra (\cite{das}). We show that isomorphism classes of abelian extension of morphism of Lie-Yamaguti algebra has one to one correspondence with the ‘simple’ $(2,3)$- cohomology group. \\

Note that any vector space $V$ can be regarded as a Lie-Yamaguti algebra where $[a,b]=0,~\{a,b,c\}=0$ for all $a,b,c\in V.$ Thus, any linear operator $\phi : V \rightarrow W$, where $V,~W$ are vector spaces , have a structure of morphism of Lie-Yamaguti algebra.\\

Let $\phi: L_1 \rightarrow L_2$ be a morphism of Lie-Yamaguti algebra with a representation $(V,W,\psi).$ A $(2n,2n+1)$-coboundary in $B_{mLYA}^{(2n,2n+1)}(\phi,\psi)$ is called simple if it has the form $d_{mLYA}(\alpha,\beta,0)$ for some $(\a,\b,0) \in C_{mLYA}^{(2n-2,2n-1)}(\phi,\psi).$ Then the space of all simple $(2n,2n+1)$-coboundaries are denoted by $B_{mLYA,s}^{(2n,2n+1)}(\phi,\psi)$ and the quotients $H_{mLYA,s}^{(2n,2n+1)}(\phi,\psi) :=\frac{Z_{mLYA}^{(2n,2n+1)}(\phi,\psi)}{B_{mLYA,s}^{(2n,2n+1)}(\phi,\psi)}$, for $n\geq 1$ are called the simple cohomology groups of the morphism of Lie-Yamaguti algebra $\phi :L_1 \rightarrow L_2$ with coefficients in $(V,W,\psi).$

\begin{definition}
An abelian extension of a morphism of Lie-Yamaguti algebra $\phi : L_1 \rightarrow L_2$  by $(V,W,\psi)$   is a linear map between the vectors spaces $\psi: V \rightarrow W$ and is a short exact sequence of morphisms of Lie-Yamaguti algebra 
\[\begin{tikzcd}
	0 & V & {\hat{L_1}} & {L_1} & 0 \\
	0 & W & {\hat{L_2}} & {L_2} & 0
	\arrow[from=1-1, to=1-2]
	\arrow["i", from=1-2, to=1-3]
	\arrow["p", from=1-3, to=1-4]
	\arrow[from=1-4, to=1-5]
	\arrow["\psi", from=1-2, to=2-2]
	\arrow["{\hat{\phi}}", from=1-3, to=2-3]
	\arrow["\phi", from=1-4, to=2-4]
	\arrow[from=2-1, to=2-2]
	\arrow["{\bar{i}}"', from=2-2, to=2-3]
	\arrow["{\bar{p}}"', from=2-3, to=2-4]
	\arrow[from=2-4, to=2-5]
\end{tikzcd}\]

In this case we say that $\hat{\phi} :\hat{L_1}\rightarrow \hat{L_2}$ is an abelian extension of the morphism of Lie-Yamaguti algebra $\phi : L_1 \rightarrow L_2$ by $(V,W,\psi).$
\end{definition}
\begin{definition}
Two abelian extension $\hat{\phi} :\hat{L_1}\rightarrow \hat{L_2}$ and $\hat{\phi^{'}} :\hat{L_1^{'}}\rightarrow \hat{L_2^{'}}$ are said to be isomorphic if there exists an isomorphism $(\alpha,\beta)$ of  morphism Lie-Yamaguti algebra from 
$\hat{\phi} :\hat{L_1}\rightarrow \hat{L_2}$ to $\hat{\phi^{'}} :\hat{L_1^{'}}\rightarrow \hat{L_2^{'}}$ so that the following diagram is commutative

\begin{tikzcd}
	0 & V && {\hat{L_1}} && {L_1} & 0 \\
	0 && V && {\hat{L_1^{'}}} && {L_1} & 0 \\
	0 & W && {\hat{L_2}} && {L_2} && 0 \\
	& 0 & W && {\hat{L_2^{'}}} && {L_2} & 0
	\arrow[Rightarrow, no head, from=1-2, to=2-3]
	\arrow[from=2-3, to=4-3]
	\arrow[from=1-2, to=3-2]
	\arrow[Rightarrow, no head, from=3-2, to=4-3]
	\arrow[from=2-5, to=4-5]
	\arrow["\beta"', from=3-4, to=4-5]
	\arrow[Rightarrow, no head, from=1-6, to=2-7]
	\arrow[from=2-7, to=4-7]
	\arrow[from=1-6, to=3-6]
	\arrow[Rightarrow, no head, from=3-6, to=4-7]
	\arrow[from=1-1, to=1-2]
	\arrow[from=3-1, to=3-2]
	\arrow[from=4-2, to=4-3]
	\arrow[from=1-6, to=1-7]
	\arrow[from=2-7, to=2-8]
	\arrow[from=4-7, to=4-8]
	\arrow[from=4-3, to=4-5]
	\arrow[from=4-5, to=4-7]
	\arrow[from=2-3, to=2-5]
	\arrow[from=2-5, to=2-7]
	\arrow[from=2-1, to=2-3]
	\arrow[from=3-6, to=3-8]
	\arrow["{\bar{p}}"{pos=0.7}, from=3-4, to=3-6]
	\arrow["{\bar{i}}"{pos=0.7}, from=3-2, to=3-4]
	\arrow["i", from=1-2, to=1-4]
	\arrow["\alpha"', from=1-4, to=2-5]
	\arrow[from=1-4, to=3-4]
	\arrow["p", from=1-4, to=1-6]
\end{tikzcd}

\end{definition}
\begin{definition}
A section is a pair $(s,\bar{s})$ of linear maps $s : L_1 \rightarrow \hat{L_1}$ and $\bar{s}: L_2 \rightarrow \hat{L_2}$ such that $p \circ s=id_{L_1}, ~\bar{p}\circ \bar{s}=id_{L_2}.
$
\end{definition}

Let $\hat{\phi} :\hat{L_1}\rightarrow \hat{L_2}$ be an abelian extension of the morphism of Lie-Yamaguti algebra $\phi : L_1 \rightarrow L_2$ by $(V,W,\psi).$ Let $(s,\bar{s})$ be a section of this abelian extension. Now define a linear map  $\rho_V:L_1 \rightarrow End(V)$ and two bilinear maps $D_V : L_1 \times L_1 \rightarrow End(V),~\theta_V: L_1 \times L_1 \rightarrow End(V) $ by 
\begin{align*}
&\rho_V(x)v=[s(x),i(v)]_{\hat{L_1}},~ D_V(x,y)v=\{s(x),s(y),i(v)\}_{\hat{L_1}}\\
& \theta_V(x,y)v=\{i(v),s(x),s(y)\}_{\hat{L_1}}~~ \mbox{for all}~ x,y\in L_1,~v\in V.
\end{align*}
and also define 
a linear map  $\rho_W:L_2 \rightarrow End(W)$ and two bilinear maps $D_W : L_2 \times L_2 \rightarrow End(W),~\theta_W : L_2 \times L_2 \rightarrow End(W) $ by 
\begin{align*}
&\rho_W(a)w=[\bar{s}(a),\bar{i}(w)]_{\hat{L_2}},~ D_W(a,b)w=\{\bar{s}(a),\bar{s}(b),\bar{i}(w)\}_{\hat{L_2}}\\
& \theta_W(a,b)w=\{\bar{i}(w),\bar{s}(a),\bar{s}(b)\}_{\hat{L_2}}~~ \mbox{for all}~ a,b\in L_2,~w\in W.
\end{align*}

Now with the above definitions we have the following proposition.
\begin{proposition}\label{Abelian LYA 1}
If $(s,\bar{s})$ is a section then $(\rho_V,D_V,\theta_V)$ is a representation of $L_1$ on $V$ and $(\rho_W,D_W,\theta_W)$ is a representation of $L_2$ on $W.$

\end{proposition}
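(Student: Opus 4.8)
The plan is to verify the seven defining axioms of a representation directly, by pushing each of them back to one of the Lie-Yamaguti axioms (3)--(6) holding in the \emph{total} algebra $\hat{L_1}$. The single structural fact that makes this work is that, because the sequence is an abelian extension, $i(V)$ is an abelian ideal of $\hat{L_1}$: it is a subalgebra on which all operations vanish, it is stable under $[\hat{L_1},-]$ and $\{\hat{L_1},\hat{L_1},-\}$ (and the other trilinear slots), and moreover \emph{any} value of $[\,\cdot,\cdot\,]$ or $\{\,\cdot,\cdot,\cdot\,\}$ having at least two of its arguments in $i(V)$ is zero. First I would record well-definedness: since $p\circ s=\mathrm{id}_{L_1}$, for $x,y\in L_1$ and $v\in V$ the elements $[s(x),i(v)]$, $\{s(x),s(y),i(v)\}$ and $\{i(v),s(x),s(y)\}$ all lie in $\ker p=i(V)$, so $\rho_V,D_V,\theta_V$ are genuine endomorphisms of $V$ under the identification $i(V)\cong V$.

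The mechanism for each axiom is uniform. I substitute the arguments $s(x),s(y),\dots$ together with one copy of $i(v)$ into the appropriate Lie-Yamaguti identity of $\hat{L_1}$. Whenever a bracket of $L_1$-elements appears inside a slot, I replace $s([x,y])$ by $[s(x),s(y)]$ and $s(\{x,y,z\})$ by $\{s(x),s(y),s(z)\}$; the errors $\omega(x,y):=s([x,y])-[s(x),s(y)]$ and $\Omega(x,y,z):=s(\{x,y,z\})-\{s(x),s(y),s(z)\}$ lie in $i(V)$ because $s$ is a section, and every term into which such an error enters then carries at least two arguments in $i(V)$ (the error itself and the ever-present $i(v)$), hence vanishes by the abelian-ideal property. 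What survives is exactly the identity asserted by the representation axiom. For instance, axiom~(1) is obtained from Lie-Yamaguti axiom~(3) applied to $(s(x),s(y),i(v))$: the cyclic sum $\circlearrowleft_{(s(x),s(y),i(v))}\big([[s(x),s(y)],i(v)]+\{s(x),s(y),i(v)\}\big)=0$, after using antisymmetry of $[\,,]$ and of $\{\,,\,,-\}$ in its first two slots and noting $[[s(x),s(y)],i(v)]=[s([x,y]),i(v)]=\rho_V([x,y])v$, becomes precisely $D_V(x,y)-\theta_V(y,x)+\theta_V(x,y)+\rho_V([x,y])-\rho_V(x)\rho_V(y)+\rho_V(y)\rho_V(x)=0$.

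Carrying this out for the remaining axioms gives the correspondence: axiom~(2) from Lie-Yamaguti~(4) with $i(v)$ in the last slot; axiom~(3) from Lie-Yamaguti~(4) with $i(v)$ in one of the first three slots; axioms~(4) and~(5) from two instantiations of Lie-Yamaguti~(5) (placing $i(v)$ inside the inner bracket, respectively in the leading slot); and axioms~(6) and~(7) from two instantiations of Lie-Yamaguti~(6). Each time the only inputs are the relevant Lie-Yamaguti axiom in $\hat{L_1}$, the antisymmetry relations used to move $i(v)$ into the slot dictated by $\rho_V,D_V,\theta_V$ with the correct sign, and the vanishing of brackets with two kernel entries. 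The statement for $(\rho_W,D_W,\theta_W)$ is then proved verbatim, replacing $s,i,\hat{L_1}$ by $\bar{s},\bar{i},\hat{L_2}$.

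The main obstacle is bookkeeping rather than conceptual: keeping the sign conventions straight while using the antisymmetries to relocate $i(v)$ into the correct slot, and confirming that \emph{all} the section-defect terms $\omega,\Omega$ really are annihilated. This is most delicate for axioms~(6) and~(7), which descend from the nested trilinear identity Lie-Yamaguti~(6) and therefore generate the largest number of defect terms; there one must check carefully that each surviving term has at most one argument in $i(V)$ and that none of the genuinely nonzero terms is accidentally discarded.
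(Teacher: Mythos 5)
Your proposal is correct and follows essentially the same route as the paper: both verify the representation axioms by instantiating the Lie-Yamaguti identities of $\hat{L_1}$ (resp.\ $\hat{L_2}$) at section values together with one kernel element $i(v)$, using that any bracket with two arguments in $\ker p = \mathrm{im}\, i$ vanishes --- a step the paper performs silently when it replaces $s([x,y])$ by $[s(x),s(y)]$, and which you make explicit via the defect terms $\omega,\Omega$. The only difference is that the paper's proof additionally checks the compatibility conditions $\psi(\rho_V(x)v)=\rho_W(\phi(x))\psi(v)$ and their $D$, $\theta$ analogues (needed later to regard $(V,W,\psi)$ as a representation of the morphism), which goes beyond the proposition as literally stated and which you omit.
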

\begin{proof}
Now for any $x,y,z \in L_1$ and $v\in V$, we have\\
\textbf{1.}
\begin{align*}
&D_V(x,y)v-\theta_V(y,x)v+\theta_V(x,y)v+\rho_V([x,y])v-\rho_V(x)\rho_V(y)v+\rho_V(y)\rho_V(x)v\\
&=\{s(x),s(y),i(v)\}_{\hat{L_1}}-\{i(v),s(y),s(x)\}_{\hat{L_1}}+\{i(v),s(x),s(y)\}_{\hat{L_1}}\\
&+[[s(x),s(y)]_{\hat{L_1}},i(v)]_{\hat{L_1}}-[s(x),[s(y),i(v)]_{\hat{L_1}}]_{\hat{L_1}}+[s(y),[s(x),i(v)]_{\hat{L_1}}]_{\hat{L_1}}\\
&=\{s (x),s(y),i(v)\}_{\hat{L_1}}+\{s(y),i(v),s(x)\}_{\hat{L_1}}+\{i(v),s(x),s(y)\}_{\hat{L_1}}\\
&+[[s(x),s(y)]_{\hat{L_1}},i(v)]_{\hat{L_1}}+[[s(y),i(v)]_{\hat{L_1}},s(x)]_{\hat{L_1}}+[[i(v),s(x)]_{\hat{L_1}},s(y)]_{\hat{L_1}}\\
&=0~~~~~~~ (\mbox{since}~ \hat{L_1}~ \mbox{is a Lie-Yamaguti alebra.})
\end{align*}
\textbf{2.}
\begin{align*}
&D([x,y],z)v+D([y,z],x)v+D([z,x],y)v\\
&=\{s([x,y]_{\hat{L_1}}),s(z),i(v)\}_{\hat{L_1}}+\{s([y,z]_{\hat{L_1}}),s(x),i(v)\}_{\hat{L_1}}+\{s([z,x]_{\hat{L_1}}),s(y),i(v)\}_{\hat{L_1}}\\
&=\{[s(x), s(y)]_{\hat{L_1}},s(z),i(v)\}_{\hat{L_1}}+\{[s(y),s(z)]_{\hat{L_1}},s(x),i(v)\}_{\hat{L_1}}+\{[s(z),s(x)]_{\hat{L_1}},s(y),i(v)\}_{\hat{L_1}}\\
&=0~~~~~~~ (\mbox{since}~ \hat{L_1}~ \mbox{is a Lie-Yamaguti alebra.})
\end{align*}
The other axioms of representation of a Lie-Yamaguti algebra $L_1$ can be proved similarly as above.

Now for any $x,y\in L_1$ and $v\in V$ we have
\begin{align*}
&\psi(\rho _V(x)v)-\rho_W(\phi(x))\psi(v)\\
&=\hat{\phi}[s(x),i(v)]_{\hat{L_1}}-[\bar{s}\circ \phi(x),\bar{i}\circ \psi(v)]_{\hat{L_1}}\\
&=[\hat{\phi}\circ s(x)-\bar{s}\circ \phi(x), \hat{\phi} \circ i(v)-\bar{i}\circ \psi(v)]_{\hat{L_2}}\\
&=0,~ \mbox {as}~ \hat{\phi}\circ s(x)-\bar{s}\circ \phi(x), ~\hat{\phi} \circ i(v)-\bar{i}\circ \psi(v) \in \mbox{ker}~(\bar{p})=\mbox{im}~(\bar{i}).
\end{align*}
Again we have,
\begin{align*}
&\psi(D_V(x,y)v)-D_W(\phi(x),\phi(y))\psi(v)\\
&=\hat{\phi}\{s(x),s(y),i(v)\}_{\hat{L_1}}-\{\bar{s}\circ \phi(x),\bar{s}\circ \phi(y),\bar{i}\circ \psi(v)\}_{\hat{L_2}}\\
&=\{\hat{\phi} \circ s(x)-\bar{s}\circ \phi(x),\hat{\phi}\circ s(y)-\bar{s}\circ \phi(y),\hat{\phi} \circ i(v)-\bar{i}\circ \psi(v)\}\\
&=0
\end{align*}
Similar as above we get $\psi(\theta_V(x,y)v)-\theta_W(\phi(x),\phi(y))\psi(v)=0.$

 Therefore, $(\rho_V,D_V,\theta_V)$ is a representation of $L_1$ on $V.$
Also by the same argument as above we get $(\rho_W,D_W,\theta_W)$ is a representation of $L_2$ on $W.$

\end{proof}

\begin{proposition}
The representation of $L_1$ on $V$ and the representation of $L_2$ on $W$ induced from a section of an abelian extension is independent of the choice of section.
\end{proposition}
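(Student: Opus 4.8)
The plan is to take two sections, express their difference through the kernel, and show that the induced operations differ only by products that are forced to vanish in an abelian extension. Let $(s,\bar s)$ and $(s',\bar s')$ be two sections, and write $(\rho_V,D_V,\theta_V),(\rho_W,D_W,\theta_W)$ and $(\rho_V',D_V',\theta_V'),(\rho_W',D_W',\theta_W')$ for the two induced triples. Since $p\circ s=\mathrm{id}_{L_1}=p\circ s'$, the map $s-s'$ has image in $\ker(p)=\mathrm{im}(i)$; as $i$ is injective there is a linear map $\tau\colon L_1\to V$ with $s(x)=s'(x)+i(\tau(x))$ for all $x\in L_1$, and likewise $\bar s(a)=\bar s'(a)+\bar i(\sigma(a))$ for some linear $\sigma\colon L_2\to W$.

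First I would dispose of $\rho_V$. By bilinearity of the bracket of $\hat L_1$,
$$\rho_V(x)v-\rho_V'(x)v=[\,s(x)-s'(x),\,i(v)\,]_{\hat L_1}=[\,i(\tau(x)),\,i(v)\,]_{\hat L_1}.$$
Both arguments lie in $\mathrm{im}(i)$, and since $i$ is a morphism of Lie-Yamaguti algebras while $V$ carries the trivial bracket, $[\,i(\tau(x)),i(v)\,]_{\hat L_1}=i([\tau(x),v]_V)=0$. Hence $\rho_V=\rho_V'$.

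Next I would handle $D_V$ and $\theta_V$ in the same spirit. Substituting $s(x)=s'(x)+i(\tau(x))$ into $D_V(x,y)v=\{s(x),s(y),i(v)\}_{\hat L_1}$ and expanding by trilinearity, the difference $D_V(x,y)v-D_V'(x,y)v$ becomes a sum of triple products each of which has at least two of its three entries in $\mathrm{im}(i)$, namely $\{s'(x),i(\tau(y)),i(v)\}_{\hat L_1}$, $\{i(\tau(x)),s'(y),i(v)\}_{\hat L_1}$ and $\{i(\tau(x)),i(\tau(y)),i(v)\}_{\hat L_1}$; the same is true for $\theta_V(x,y)v-\theta_V'(x,y)v$. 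The decisive input is the defining (square-zero) property of an abelian extension: every bracket and every triple product of $\hat L_1$ having two or more arguments in $\mathrm{im}(i)\cong V$ vanishes, which is exactly what makes $V$ an abelian ideal and what was already invoked in Proposition \ref{Abelian LYA 1}. Consequently every term in these differences is zero, so $D_V=D_V'$ and $\theta_V=\theta_V'$.

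Finally I would repeat the identical computation on the lower row, using $\bar s(a)=\bar s'(a)+\bar i(\sigma(a))$ together with the vanishing of all products of $\hat L_2$ with two or more entries in $\mathrm{im}(\bar i)$, to obtain $\rho_W=\rho_W'$, $D_W=D_W'$ and $\theta_W=\theta_W'$. The only real subtlety is to pin down and use correctly the abelian-extension hypothesis that annihilates all products with at least two kernel entries; granting that, the argument is just bookkeeping with (tri)linearity, and there is no interaction between the two rows to worry about.
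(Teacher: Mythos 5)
Your proof is correct and takes essentially the same route as the paper: both arguments express the difference of the two sections through $\ker p=\mathrm{im}\, i$ (resp.\ $\ker \bar p = \mathrm{im}\,\bar i$) and then annihilate the resulting terms using the abelianness of the extension. If anything, your treatment is more careful than the paper's, which works out only the $\rho_V$ case in detail and dismisses $D_V$, $\theta_V$ and the lower row with ``similarly,'' whereas you make explicit the trilinear expansion and the needed vanishing of all products having at least two entries in the ideal.
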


\begin{proof}
Let $(s,\bar{s})$ and $(s^{'},\bar{s^{'}})$ be two section.
Now $\rho_V(x)-\rho_V^{'}(x)=[s(x)-s^{'}(x),i(v)]_{\hat{L_1}}=0$ and  $\rho_W(a)-\rho_W^{'}(a)=[s(a)-s^{'}(a),i(w)]_{\hat{L_2}}=0,$ since $s(x)-s^{'}(x)\in \mbox{ker}~ p= \mbox{im}~i $ and $\bar{s}(a)-\bar{s^{'}}(a) \in \mbox{ker}~ \bar{p}= \mbox{im}~\bar{i}$ for all $x\in L_1,~a\in L_2,~v\in V,~w \in W.$ 

Similarly, $D_V(x,y)v-D_V^{'}(x,y)v=0$, $D_W(a,b)w-D_W^{'}(a,b)w=0$ and $\theta(x,y)v-\theta^{'}(x,y)v=0$, $\theta_W(a,b)w-\theta_W^{'}(a,b)w=0$ for all $x,y \in L_1,~a,b \in L_2,~v\in V,~w \in W.$ Therefore, the representation induced by the sections $(s,\bar{s})$ and $(s^{'},\bar{s^{'}})$ are same.
\end{proof}

Let $\phi: L_1 \rightarrow L_2$ be a morphism of Lie-Yamaguti algebra with a representation $(V,W,\psi)$. Now define $Ext(\phi,\psi)$ be the isomorphism classes of the abelian extensions of $\phi:L_1 \rightarrow L_2$ by the representation $(V,W,\psi)$. It is clear from the Proposition (\ref{Abelian LYA 1})  $Ext(\phi,\psi)$ is non empty.

\begin{lemma} 
Every abelian extension of  $\phi : L_1 \rightarrow L_2$ by $(V,W,\psi)$  gives a cohomology class in $H^{(2,3)}_{mLYA,s}(\phi,\psi).$
\end{lemma}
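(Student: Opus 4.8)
The plan is to produce the class from an arbitrary section and then show that the choice of section affects it only by a simple coboundary. First I would fix a section $(s,\bar s)$, so that $p\circ s=\mathrm{id}_{L_1}$ and $\bar p\circ\bar s=\mathrm{id}_{L_2}$, and measure the three distinct ways in which the section and the lift $\hat\phi$ fail to preserve structure. Since $p$ is a morphism, both $[s(x),s(y)]_{\hat L_1}-s([x,y])$ and $\{s(x),s(y),s(z)\}_{\hat L_1}-s(\{x,y,z\})$ lie in $\ker p=\mathrm{im}\,i\cong V$; calling these $\alpha_I$ and $\alpha_{II}$ defines $\alpha=(\alpha_I,\alpha_{II})\in C^{(2,3)}(L_1,V)$. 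The analogous defects of $\bar s$ in $\hat L_2$ give $\beta=(\beta_I,\beta_{II})\in C^{(2,3)}(L_2,W)$. Finally, because $\hat\phi$ covers $\phi$ we have $\hat\phi(s(x))-\bar s(\phi(x))\in\ker\bar p=\mathrm{im}\,\bar i\cong W$, and I would set $\gamma(x):=\hat\phi(s(x))-\bar s(\phi(x))\in C^1(L_1,W_\phi)$, passing to its diagonal in $C(L_1,W_\phi)$. This yields a candidate $(\alpha,\beta,\gamma)\in C^{(2,3)}_{mLYA}(\phi,\psi)$, where the representations of $L_1$ on $V$ and of $L_2$ on $W$ are the ones induced by the section, well defined by Proposition~\ref{Abelian LYA 1}.

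The next step is to verify that $(\alpha,\beta,\gamma)$ is a cocycle, i.e. $\delta(\alpha)=0$, $\delta(\beta)=0$, and $\psi\circ\alpha-\beta\phi-\delta(\gamma)=0$. The first two identities are exactly the assertion that the section of an abelian extension of a single Lie-Yamaguti algebra gives a $(2,3)$-cocycle, obtained by expanding the six Lie-Yamaguti axioms of $\hat L_1$ (resp. $\hat L_2$) and reading off the $V$-components (resp. $W$-components); I would invoke the abelian-extension theory of \cite{ZL} rather than recompute these. The genuinely new content is the morphism compatibility, which I would establish by applying the morphism $\hat\phi$ to $i(\alpha_I(x,y))=[s(x),s(y)]_{\hat L_1}-s([x,y])$, substituting $\hat\phi\circ s=\bar s\circ\phi+\gamma$ and $\hat\phi\circ i=\bar i\circ\psi$, expanding $[\bar s(\phi x)+\gamma(x),\bar s(\phi y)+\gamma(y)]_{\hat L_2}$ by bilinearity, and using $[W,W]_{\hat L_2}=0$ together with the definitions of $\rho_W$ and $\beta_I$. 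This collapses to $\psi(\alpha_I(x,y))=\beta_I(\phi x,\phi y)+\delta_I\gamma(x,y)$, and the identical manipulation with the ternary bracket gives the $\alpha_{II}$ identity, so $(\alpha,\beta,\gamma)\in Z^{(2,3)}_{mLYA}(\phi,\psi)$.

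Finally I would show that the resulting class is independent of the section. Two sections satisfy $s-s'=\lambda$ with $\lambda\in C^1(L_1,V)$ and $\bar s-\bar s'=\mu$ with $\mu\in C^1(L_2,W)$, since they agree after composing with $p$, resp. $\bar p$. A short computation—again using $[V,V]_{\hat L_1}=0$, $[W,W]_{\hat L_2}=0$ and $\hat\phi|_V=\psi$—yields $\alpha-\alpha'=\delta(\lambda)$, $\beta-\beta'=\delta(\mu)$, and $\gamma-\gamma'=\psi\circ\lambda-\mu\circ\phi$, whence $(\alpha,\beta,\gamma)-(\alpha',\beta',\gamma')=\delta_{mLYA}((\lambda,\lambda),(\mu,\mu),0)$. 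As this coboundary comes from a cochain whose $W_\phi$-component vanishes, it is precisely a \emph{simple} coboundary in $B^{(2,3)}_{mLYA,s}(\phi,\psi)$, which explains why the class is well defined only after passing to $H^{(2,3)}_{mLYA,s}(\phi,\psi)$. The step I expect to be the main obstacle is the morphism-compatibility computation: keeping the identifications $i,\bar i,\psi,\hat\phi$ and the sign conventions of $\delta_I\gamma$ and $\delta_{II}\gamma$ mutually consistent while expanding the mixed brackets, and in particular confirming that every term of the form $\bar s$ applied to a bracket cancels, so that only the representation terms and $\beta$ survive.
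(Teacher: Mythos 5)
Your proposal is correct and follows essentially the same route as the paper: you build the same cocycle $((\alpha_I,\alpha_{II}),(\beta_I,\beta_{II}),(\gamma,\gamma))$ from a section, import $\delta(\alpha_I,\alpha_{II})=0$ and $\delta(\beta_I,\beta_{II})=0$ from the abelian-extension theory of \cite{ZL}, and verify the morphism-compatibility $\psi\circ\alpha-\beta\phi-\delta(\gamma,\gamma)=0$ by the expansion with $\hat{\phi}\circ s=\bar{s}\circ\phi+\gamma$ and $[W,W]_{\hat{L_2}}=0$, which is exactly the ``direct computation'' the paper asserts without detail, and your version of it is accurate. Your closing paragraph showing the class is independent of the section, with difference the simple coboundary $\delta_{mLYA}((\lambda,\lambda),(\mu,\mu),(0,0))$, is also correct; the paper simply defers that verification to its subsequent lemma rather than including it in this proof.
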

\begin{proof}
Let $\hat{\phi} :\hat{L_1}\rightarrow \hat{L_2}$ be an abelian extension of $\phi : L_1 \rightarrow L_2$ by $(V,W,\psi)$. Let $(s,\bar{s})$ be a section. Now define the maps

$\alpha_I :L_1 \times L_1 \rightarrow V,~ \beta : L_2 \times L_2 \rightarrow W, ~\gamma : L_1 \rightarrow W$ by 
\begin{align*}
&\alpha_I(x,y):=[s(x),s(y)]_{\hat{L_1}}-s([x,y]_{L_1}),\\
&\beta_I(a,b):=[\bar{s}(a),\bar{s}(b)]_{\hat{L_2}}-\bar{s}([a,b]_{L_2}) ~~~ \mbox{and}\\
& \gamma(x):=\hat{\phi}(s(x))-\bar{s}(\phi(s)),~~~ \mbox{for all }~ x,y \in L_1,~a,b \in L_2.
\end{align*}
and 
$\alpha_{II} :L_1\times L_1 \times L_1\rightarrow V,~ \beta : L_2 \times L_2 \times L_2 \rightarrow W$ by 
\begin{align*}
&\alpha_{II}(x,y,z):=\{s(x),s(y),s(z)\}_{\hat{L_1}}-s(\{x,y,z\}_{L_1})\\
&\beta_{II}(a,b,c):=\{\bar{s}(a),\bar{s}(b),\bar{s}(c)\}_{\hat{L_2}}-\bar{s}(\{a,b,c\}_{L_2})~~~ 
\end{align*}
Therefore, $((\alpha_I,\alpha_{II}),(\beta_I,\beta_{II}),(\gamma,\gamma)) \in C^{(2,3)}_{mLYA}(\phi,\psi).$ By the theory of abelian extension of Lie Yamaguti algebra (\cite{ZL}) we have $\delta(\alpha_I,\alpha_{II})=0, \delta(\beta_{I},\beta_{II})=0$ and a direct computation shows that $\phi\circ (\alpha_I,\alpha_{II})-(\beta_I,\beta_{II})\phi-\delta(\gamma,\gamma)=0.$ Hence, $((\alpha_I,\alpha_{II}),(\beta_I,\beta_{II}),(\gamma,\gamma)) \in Z^{(2,3)}_{mLYA}(\phi,\psi)$. Therefore,  $((\alpha_I,\alpha_{II}),(\beta_I,\beta_{II}),(\gamma,\gamma))$ corresponds to a cohomology class in $H^{(2,3)}_{mLYA,s}(\phi,\psi).$

\end{proof}

\begin{lemma}
The cohomology class induced from an abelian extension of  $\phi : L_1 \rightarrow L_2$ by $(V,W,\psi)$  does not depend on the choice of section.
\end{lemma}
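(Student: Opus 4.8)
The plan is to show that the two cocycles produced by two different sections differ by a simple coboundary. So let $(s,\bar{s})$ and $(s',\bar{s'})$ be two sections, and let $((\alpha_I,\alpha_{II}),(\beta_I,\beta_{II}),(\gamma,\gamma))$ and $((\alpha_I',\alpha_{II}'),(\beta_I',\beta_{II}'),(\gamma',\gamma'))$ be the corresponding $(2,3)$-cocycles built as in the preceding lemma. Since $p\circ s=p\circ s'=\mathrm{id}_{L_1}$, the map $s-s'$ lands in $\ker p=\mathrm{im}\,i$, so there is a unique linear map $\lambda_1:L_1\to V$ with $i\circ\lambda_1=s-s'$; similarly $\bar{i}\circ\lambda_2=\bar{s}-\bar{s'}$ for a unique $\lambda_2:L_2\to W$. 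I claim the difference of the two cocycles is exactly $d_{mLYA}((\lambda_1,\lambda_1),(\lambda_2,\lambda_2),(0,0))$.

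First I would substitute $s=s'+i\circ\lambda_1$ into the defining formulas for $\alpha_I,\alpha_{II}$ and expand by multilinearity. Every resulting term containing two or more factors from $\mathrm{im}\,i$ vanishes, since $V$ is an abelian ideal of $\hat{L_1}$ (all brackets with at least two entries in $V$ are zero), and the single-$\lambda_1$ cross terms are precisely those defining $\rho_V,D_V,\theta_V$; here I invoke the proposition above that the induced representation is section-independent, so these operators may be read off using either $s$ or $s'$. Tracking which slot the $V$-factor occupies (and using $\{\,\cdot\,,\,\cdot\,,\,\cdot\,\}=-\{\,\cdot\,,\,\cdot\,,\,\cdot\,\}$ in the first two arguments to convert the middle-slot term into a $\theta_V$-term), injectivity of $i$ should yield
\[
\alpha_I-\alpha_I'=\delta_I\lambda_1,\qquad \alpha_{II}-\alpha_{II}'=\delta_{II}\lambda_1,
\]
that is, $(\alpha_I,\alpha_{II})-(\alpha_I',\alpha_{II}')=\delta(\lambda_1)$; the same computation carried out in $\hat{L_2}$ gives $(\beta_I,\beta_{II})-(\beta_I',\beta_{II}')=\delta(\lambda_2)$.

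For the morphism slot, $\gamma-\gamma'=\hat{\phi}\circ i\circ\lambda_1-\bar{i}\circ\lambda_2\circ\phi$, and commutativity of the left square of the extension, $\hat{\phi}\circ i=\bar{i}\circ\psi$, rewrites this as $\bar{i}\circ(\psi\circ\lambda_1-\lambda_2\circ\phi)$; identifying $W$ with $\mathrm{im}\,\bar{i}$, this is the morphism component of $d_{mLYA}((\lambda_1,\lambda_1),(\lambda_2,\lambda_2),(0,0))$. Assembling the three slots, the difference of the two cocycles equals this coboundary, whose third cochain entry is $0$, so it lies in $B^{(2,3)}_{mLYA,s}(\phi,\psi)$ and the two cocycles define the same class in $H^{(2,3)}_{mLYA,s}(\phi,\psi)$. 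I expect the only real obstacle to be the bookkeeping in the trilinear identity for $\alpha_{II}$: matching each single-$\lambda_1$ term with the correct operator among $\rho_V,D_V,\theta_V$ and the correct slot, and fixing signs so that the outcome is literally $\delta_{II}\lambda_1$; the vanishing of the higher-order terms and the morphism-part identity are routine by comparison.
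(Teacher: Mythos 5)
Your proposal is correct and takes essentially the same route as the paper's proof: both write the difference of sections as $\lambda_1=s-s'$ (the paper's $\eta$) and $\lambda_2=\bar{s}-\bar{s'}$ (the paper's $\eta'$), expand the cocycle formulas by multilinearity, kill the quadratic terms using abelianness of $V$ and $W$, and identify the difference of cocycles with $d_{mLYA}((\lambda_1,\lambda_1),(\lambda_2,\lambda_2),(0,0))$. You are in fact slightly more careful than the paper on two points it leaves implicit, namely the explicit computation of the morphism slot $\gamma-\gamma'=\psi\circ\lambda_1-\lambda_2\circ\phi$ via $\hat{\phi}\circ i=\bar{i}\circ\psi$, and the remark that the resulting coboundary is \emph{simple} (third cochain entry zero), which is exactly what membership in $B^{(2,3)}_{mLYA,s}(\phi,\psi)$ and hence a well-defined class in $H^{(2,3)}_{mLYA,s}(\phi,\psi)$ requires.
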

\begin{proof}
Let $\hat{\phi} :\hat{L_1}\rightarrow \hat{L_2}$ be an abelian extension of a morphism of Lie-Yamaguti algebra $\phi : L_1 \rightarrow L_2$ by $(V,W,\psi).$ Let $(s,\bar{s})$ and $(s^{'},\bar{s^{'}})$ be two section of it. Now define $\eta: L_1 \rightarrow V$ by $\eta(x)=s(x)-s^{'}(x)$ for all $x \in L_1$ and $ \eta^{'}: L_2 \rightarrow W$ by $\eta^{'}(a)=\bar{s}(a)-\bar{s^{'}}(a)$ for all $a \in L_2.$

Now,\begin{align*}
&\alpha_{I}(x,y)\\
&=[s(x),s(y)]_{\hat{L_1}}-s([x,y]_{L_1})\\
&=[s^{'}(x)+\eta(x),s^{'}(y)+\eta(y)]_{\hat{L_1}}-(s^{'}([x,y])+\eta
([x,y]))\\
&=[s^{'}(x),s^{'}(y)]_{\hat{L_1}}-s^{'}([x,y]_{L_1})+[\eta(x),s^{'}(y)]_{\hat{L_1}}+[s^{'}(x),\eta(y)]_{\hat{L_1}}-\eta
([x,y]_{L_1})~( \mbox{as}~ [\eta(x),\eta(y)]=0)\\
&=\alpha_I^{'}(x,y)+\delta_I(\eta)(x,y)
\end{align*}
Similarly we have, $\alpha_{II}(x,y,z)=\alpha^{'}(x,y,z)+\eta_{II}(\eta)(x,y,z)$
Therefore, $(\alpha_I,\alpha_{II})-(\alpha^{'}_I,\alpha^{'}_{II})=(\delta_I(\eta),\delta_{II}(\eta)) \in Z^{(2,3)}(L_1,V).$\\
Similarly using $\eta^{'}$ we will get $(\beta_I,\beta_{II})-(\beta_I^{'}\beta_{II}^{'})\in Z^{(2,3)}(L_2,W)$ and $(\gamma,\gamma)-(\gamma^{'},\gamma^{'})\in Z(L_1,W).$
Therefore, $((\alpha_I,\alpha_{II}),(\beta_I,\beta_II),(\gamma,\gamma))$ and $((\alpha_I^{'},\alpha_{II}^{'}),(\beta_I^{'},\beta_II^{'}),(\gamma^{'},\gamma^{'}))$ are in the same cohomology class in $H_{mLYA,s}^{(2,3)}(\phi,\psi).$

\end{proof}
\begin{theorem}
 Isomorphic abelian extensions of  $\phi : L_1 \rightarrow L_2$ by $(V,W,\psi)$ gives rise to the same element in $H^{(2,3)}_{mLYA,s}(\phi,\psi)$.
\end{theorem}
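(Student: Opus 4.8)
The plan is to reduce the statement to the section-independence lemma proved just above, by transporting a section across the given isomorphism of extensions. Let $\hat\phi:\hat L_1\to\hat L_2$ and $\hat{\phi'}:\hat L_1'\to\hat L_2'$ be two isomorphic abelian extensions of $\phi:L_1\to L_2$ by $(V,W,\psi)$, and let the isomorphism between them be a pair $(\vartheta_1,\vartheta_2)$ with $\vartheta_1:\hat L_1\to\hat L_1'$ and $\vartheta_2:\hat L_2\to\hat L_2'$. The first thing I would extract from the commutativity of the defining diagram is the compatibility of $\vartheta_1,\vartheta_2$ with the horizontal maps, namely $\vartheta_1\circ i=i'$, $p'\circ\vartheta_1=p$, $\vartheta_2\circ\bar i=\bar i'$ and $\bar p'\circ\vartheta_2=\bar p$, together with the vertical intertwining $\hat{\phi'}\circ\vartheta_1=\vartheta_2\circ\hat\phi$. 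Since both extensions are by the \emph{same} $(V,W,\psi)$, the leftmost squares are equalities, so $\vartheta_1$ carries $i(V)$ onto $i'(V)$ and $\vartheta_2$ carries $\bar i(W)$ onto $\bar i'(W)$, acting as the identity under the identifications $i(V)\cong V$ and $\bar i(W)\cong W$.

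Next I would fix a section $(s,\bar s)$ of the first extension and transport it to the second by setting $s':=\vartheta_1\circ s$ and $\bar s':=\vartheta_2\circ\bar s$. A one-line check shows $(s',\bar s')$ is a section of $\hat{\phi'}$: using $p'\circ\vartheta_1=p$ and $\bar p'\circ\vartheta_2=\bar p$ we get $p'\circ s'=p\circ s=\mathrm{id}_{L_1}$ and $\bar p'\circ\bar s'=\bar p\circ\bar s=\mathrm{id}_{L_2}$. This is the key move: it lets me compute both representative cocycles from a single piece of underlying data.

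Then I would compute the cocycle of the second extension associated to $(s',\bar s')$ and match it termwise with that of the first associated to $(s,\bar s)$. Because $\vartheta_1$ is a Lie-Yamaguti algebra homomorphism, $i'(\alpha_I'(x,y))=[s'(x),s'(y)]_{\hat L_1'}-s'([x,y])=\vartheta_1\big([s(x),s(y)]_{\hat L_1}-s([x,y])\big)=\vartheta_1\big(i(\alpha_I(x,y))\big)=i'(\alpha_I(x,y))$, so injectivity of $i'$ gives $\alpha_I'=\alpha_I$; the same computation with the ternary bracket yields $\alpha_{II}'=\alpha_{II}$, and the analogous argument for $\vartheta_2$ yields $\beta_I'=\beta_I$ and $\beta_{II}'=\beta_{II}$. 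For the morphism part I would use the vertical intertwining: $\bar i'(\gamma'(x))=\hat{\phi'}(s'(x))-\bar s'(\phi(x))=\vartheta_2\big(\hat\phi(s(x))-\bar s(\phi(x))\big)=\vartheta_2\big(\bar i(\gamma(x))\big)=\bar i'(\gamma(x))$, whence $\gamma'=\gamma$. Thus the two cocycles coincide on the nose, hence certainly represent the same class in $H^{(2,3)}_{mLYA,s}(\phi,\psi)$.

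Finally, invoking the preceding lemma that the induced class is independent of the choice of section, I would conclude that the class attached to the first extension (computed from $(s,\bar s)$) and the class attached to the second extension (computed from any section) agree, which proves the theorem. I expect the main obstacle to be purely bookkeeping: verifying that each cochain value genuinely lands in $i(V)$ (resp. $\bar i(W)$) so that $\vartheta_1$ (resp. $\vartheta_2$) restricts to the identity there, and keeping the identifications $i(V)\cong V$, $\bar i(W)\cong W$ and the injectivity of $i',\bar i'$ consistent throughout; the algebraic content is entirely carried by the facts that $\vartheta_1,\vartheta_2$ preserve both brackets and that they intertwine $\hat\phi$ with $\hat{\phi'}$.
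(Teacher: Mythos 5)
Your proposal is correct and follows essentially the same route as the paper's proof: transport the section $(s,\bar s)$ across the isomorphism to get a section $(\alpha\circ s,\beta\circ\bar s)$ of the second extension, verify the cocycles computed from these two sections coincide on the nose (using that the isomorphism restricts to the identity on $V$ and $W$ and intertwines $\hat\phi$ with $\hat{\phi'}$), and conclude via section-independence. Your write-up is in fact slightly more careful than the paper's, spelling out the injectivity of $i',\bar i'$ and the vertical intertwining used for the $\gamma$-component, but the argument is the same.
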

\begin{proof}
Let $\hat{\phi} :\hat{L_1}\rightarrow \hat{L_2}$ and $\hat{\phi^{'}} :\hat{L_1^{'}}\rightarrow \hat{L_2^{'}}$ be two isomorphic abelian extensions and let $(\alpha, \beta)$ be the isomorphism between them. Now suppose $(s,\bar{s})$ be a section of of the abelian extension $\hat{\phi} :\hat{L_1}\rightarrow \hat{L_2}.$ Therefore,
$p^{'}\circ (\alpha \circ s)=p \circ s=id_{L_1}$ and $\bar{p^{'}}\circ (\beta \circ \bar{s})=\bar{p}\circ \bar{s}=id_{L_2}.$ Thus, $(\alpha \circ s,\beta \circ \bar{s})$ is a section of $\hat{\phi^{'}} :\hat{L_1^{'}}\rightarrow \hat{L_2^{'}}$. Now let $((\alpha_I^{'},\alpha_{II}^{'}),(\beta_I^{'},\beta_II^{'}),(\gamma^{'},\gamma^{'}))\in Z^{(2,3)}_{mLYA}$ be the $(2,3)$- cocycle corresponding to the abelian extension $\hat{\phi^{'}} :\hat{L_1^{'}}\rightarrow \hat{L_2^{'}}$ with section $(\alpha \circ s,\beta \circ \bar{s}).$ Then we have
\begin{align*}
\alpha_{I}^{'}(x,y)&=[\alpha \circ s(x),\alpha \circ s(y)]_{\hat{L_1}}-\alpha \circ s([x,y]_{L_1})\\
&=\alpha([s(x),s(y)]_{\hat{L_1}}-s([x,y]_{L_1}))\\
&=\alpha_I(x,y)~~(\mbox{as} ~\alpha_I|_{V}=id_V),~\mbox{for all}~ x,y\in L_1.
\end{align*}

Similarly, we will get $\alpha_{II}^{'}(x,y,z)=\alpha_{II}(x,y,z)$ for all $x,y,z \in L_1$;
\\ $\beta_I^{'}(a,b)=\beta_{I}(a,b)$ and $\beta_{II}^{'}(a,b,c)=\beta_{II}(a,b,c)$ for all $a,b,c \in L_2$ and $\gamma^{'}(x)=\gamma(x)$ for all $x \in L_1.$ Hence, $((\alpha_I,\alpha_{II}),(\beta_I,\beta_II),(\gamma,\gamma))=((\alpha_I^{'},\alpha_{II}^{'}),(\beta_I^{'},\beta_II^{'}),(\gamma^{'},\gamma^{'})).$ 

Thus, isomorphic abelian extension gives rise to the same element in $H^{(2,3)}_{mLYA,s}(\phi,\psi)$.

\end{proof}

\begin{theorem}
Every element in $H^{(2,3)}_{mLYA,s}(\phi,\psi)$ induces an abelian extension of $\phi : L_1 \rightarrow L_2$ by $(V,W,\psi)$. 
\end{theorem}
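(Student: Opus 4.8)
The plan is to reverse the construction underlying the preceding lemmas: from a cocycle representing the given class I will build an explicit abelian extension, and then check that this assignment descends to $H^{(2,3)}_{mLYA,s}(\phi,\psi)$. First I fix a representative $((\alpha_I,\alpha_{II}),(\beta_I,\beta_{II}),(\gamma,\gamma))\in Z^{(2,3)}_{mLYA}(\phi,\psi)$. On the vector space $\hat{L_1}=L_1\oplus V$ I would install the twisted operations
\begin{align*}
&[(x,u),(y,v)]_{\hat{L_1}} = ([x,y],\, \rho_V(x)v-\rho_V(y)u+\alpha_I(x,y)),\\
&\{(x,u),(y,v),(z,w)\}_{\hat{L_1}} = (\{x,y,z\},\, \theta_V(y,z)u-\theta_V(x,z)v+D_V(x,y)w+\alpha_{II}(x,y,z)),
\end{align*}
and, entirely analogously, operations on $\hat{L_2}=L_2\oplus W$ twisted by $(\beta_I,\beta_{II})$ through $(\rho_W,D_W,\theta_W)$. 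Because $\delta'(\alpha_I,\alpha_{II})=0$ and $\delta''(\beta_I,\beta_{II})=0$, the seven Lie-Yamaguti axioms for $\hat{L_1}$ and $\hat{L_2}$ hold; since this is exactly the single-algebra abelian extension construction, I would invoke \cite{ZL} rather than reprove it. Note that with the canonical sections $s(x)=(x,0)$, $\bar{s}(a)=(a,0)$ the induced representations recovered via Proposition \ref{Abelian LYA 1} are the given $(\rho_V,D_V,\theta_V)$ and $(\rho_W,D_W,\theta_W)$.

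Next I define $\hat{\phi}:\hat{L_1}\to\hat{L_2}$ by $\hat{\phi}(x,u)=(\phi(x),\psi(u)+\gamma(x))$, together with $i(v)=(0,v)$, $p(x,u)=x$, $\bar{i}(w)=(0,w)$, $\bar{p}(a,w)=a$. The maps $i,p,\bar{i},\bar{p}$ make both rows short exact, and $\hat{\phi}\circ i=\bar{i}\circ\psi$, $\bar{p}\circ\hat{\phi}=\phi\circ p$ are immediate, so the two squares commute. The essential point is that $\hat{\phi}$ preserves the two operations. Expanding $\hat{\phi}([(x,u),(y,v)]_{\hat{L_1}})=[\hat{\phi}(x,u),\hat{\phi}(y,v)]_{\hat{L_2}}$ and using the compatibility relations $\psi\rho_V(x)=\rho_W(\phi(x))\psi$ etc., the $L_2$-components agree automatically and the $W$-component collapses to $\psi\circ\alpha_I-\beta_I\phi=\delta_I\gamma$, where $\beta_I\phi(x,y)=\beta_I(\phi(x),\phi(y))$ and $\delta_I\gamma$ is the $W_\phi$-coboundary. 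The ternary identity reduces in the same way to $\psi\circ\alpha_{II}-\beta_{II}\phi=\delta_{II}\gamma$. These two equations are precisely the mixed component of $\delta_{mLYA}((\alpha_I,\alpha_{II}),(\beta_I,\beta_{II}),(\gamma,\gamma))=0$, so $\hat\phi$ is a morphism if and only if our triple is a cocycle. This equivalence is the heart of the argument and the step I would write out in full.

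Finally, for the assignment to be well defined on $H^{(2,3)}_{mLYA,s}$ I must verify that changing the representative by a simple coboundary produces an isomorphic extension. Given $d_{mLYA}(\lambda,\mu,0)$ with $\lambda:L_1\to V$ and $\mu:L_2\to W$, I would show the maps $A(x,u)=(x,u+\lambda(x))$ and $B(a,w)=(a,w+\mu(a))$ are Lie-Yamaguti isomorphisms carrying the operations twisted by the new cocycle $c-d_{mLYA}(\lambda,\mu,0)$ to those twisted by $c$; a direct check with $\gamma'=\gamma-(\psi\lambda-\mu\phi)$ gives $\hat{\phi}'\circ A=B\circ\hat{\phi}$, and since $A|_V=\mathrm{id}_V$, $B|_W=\mathrm{id}_W$ and $A,B$ descend to $\mathrm{id}_{L_1},\mathrm{id}_{L_2}$, the pair $(A,B)$ is an isomorphism of abelian extensions. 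I would close by observing that applying the previous lemma to the constructed extension with the canonical section returns the original cocycle, so combined with the earlier lemmas this upgrades the statement to the bijection $Ext(\phi,\psi)\cong H^{(2,3)}_{mLYA,s}(\phi,\psi)$. I expect the main obstacle to be purely organizational: fixing all signs in the twisted operations so that they are consistent with the conventions of $\delta_I,\delta_{II}$, and carrying the lengthy seven-axiom and morphism verifications through cleanly. The conceptual content, that the morphism part of the cocycle condition is literally the assertion that $\hat\phi$ is a homomorphism, is transparent once the conventions are pinned down.
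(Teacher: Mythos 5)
Your proposal is correct and follows essentially the same route as the paper: build $\hat{L_1}=L_1\oplus V$ and $\hat{L_2}=L_2\oplus W$ with the brackets twisted by $(\alpha_I,\alpha_{II})$ and $(\beta_I,\beta_{II})$, cite \cite{ZL} for the Lie-Yamaguti axioms, set $\hat{\phi}(x,u)=(\phi(x),\psi(u)+\gamma(x))$, and observe that $\hat{\phi}$ being a morphism is exactly the mixed cocycle condition $\psi\circ\alpha-\beta\phi-\delta\gamma=0$ (a step the paper leaves implicit, and which you rightly identify as the heart of the matter). Your final paragraph on simple coboundaries yielding isomorphic extensions is not needed for this theorem itself --- the paper handles it in the subsequent lemma --- but it is consistent with that lemma's construction $A(x,u)=(x,u+\lambda(x))$, $B(a,w)=(a,w+\mu(a))$.
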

\begin{proof}
Let $((\alpha_I,\alpha_{II}),(\beta_I,\beta_II),(\gamma,\gamma)) \in Z^{(2,3)}_{mLYA}(\phi,\psi)$ be a $(2,3)$- cocycle. Now define $\hat{L_1}:= L_1 \oplus V,~\hat{L_2}=L_2 \oplus W$ and define $\hat{\phi}: \hat{L_1} \rightarrow \hat{L_2}$ by $\hat{\phi}(x,v)=(\phi(x),\psi(v)+\gamma(x))$, for all $(x,v) \in \hat{L_1}.$ Now define the bilinear and trilinear brackets on $\hat{L_1}$ and $\hat{L_2}$ by
\begin{align*}
&[(x,v_1),(y,v_2)]_{\hat{L_1}}:=([x,y]_{L_1},\rho_V(x)v_2-\rho_V(y)v_1+\delta_I(x,y))\\
&\{(x,v_1),(y,v_2),(z,v_3)\}_{\hat{L_1}}:=(\{x,y,z\}_{L_1},\theta_V(y,z)v_1-\theta_V(x,z)v_2+D_V(x,y)v_3+\delta_{II}(x,y,z))\\
&\mbox{for all}~ x,y,z \in L_1,~ v_1,v_2,v_3\in V.\\
&\mbox{and}\\
&[(a,w_1),(b,w_2)]_{\hat{L_2}}:=([a,b]_{L_2},\rho_V(a)w_2-\rho_V(b)w_1+\delta_I(a,b))\\
&\{(a,w_1),(b,w_2),(c,w_3)\}_{\hat{L_2}}:=(\{a,b,c\}_{L_2},\theta_V(b,c)w_1-\theta_V(a,c)w_2+D_V(a,b)w_3+\delta_{II}(a,b,c))\\
&\mbox{for all}~ a,b,c \in L_2,~ w_1,w_2,w_3\in W.
\end{align*}
Then  $\hat{L_1}$, $\hat{L_2}$ are Lie-Yamaguti algebra with the above defined brackets (for proof see \cite{ZL}).
It is easy to show that
 $\hat{\phi} :\hat{L_1}\rightarrow \hat{L_2}$ is a morphisms of Lie-Yamaguti algebra
and  $\hat{\phi} :\hat{L_1}\rightarrow \hat{L_2}$ is an abelian extension of $(L_1,L_2,\phi)$ by $(V,W,\psi)$ (For details see \cite{ZL}).
\end{proof}
\begin{lemma}
Cohomologous elements in $H^{(2,3)}_{mLYA,s}(\phi,\psi)$ gives isomorphic abelian extensions of $\phi : L_1 \rightarrow L_2$ by $(V,W,\psi)$.
\end{lemma}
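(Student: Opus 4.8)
The plan is to read off an explicit isomorphism of extensions from the data witnessing that the two cocycles are cohomologous. Let $((\alpha_I,\alpha_{II}),(\beta_I,\beta_{II}),(\gamma,\gamma))$ and $((\alpha_I',\alpha_{II}'),(\beta_I',\beta_{II}'),(\gamma',\gamma'))$ be cohomologous elements of $Z^{(2,3)}_{mLYA}(\phi,\psi)$. Since they differ by a \emph{simple} coboundary, there are linear maps $\eta:L_1\to V$ and $\eta':L_2\to W$ with
\[
\big((\alpha_I,\alpha_{II}),(\beta_I,\beta_{II}),(\gamma,\gamma)\big)-\big((\alpha_I',\alpha_{II}'),(\beta_I',\beta_{II}'),(\gamma',\gamma')\big)=d_{mLYA}\big((\eta,\eta),(\eta',\eta'),(0,0)\big),
\]
and unwinding this componentwise gives the relations
\begin{align*}
&\alpha_I-\alpha_I'=\delta_I\eta,\quad \alpha_{II}-\alpha_{II}'=\delta_{II}\eta,\\
&\beta_I-\beta_I'=\delta_I\eta',\quad \beta_{II}-\beta_{II}'=\delta_{II}\eta',\\
&\gamma-\gamma'=\psi\circ\eta-\eta'\circ\phi.
\end{align*}
By the preceding theorem each cocycle determines an abelian extension on the vector spaces $L_1\oplus V$ and $L_2\oplus W$; write $\hat{L_1},\hat{L_2},\hat\phi$ for the extension built from the unprimed cocycle and $\hat{L_1}',\hat{L_2}',\hat\phi'$ for the one built from the primed cocycle, the two differing only in their brackets and in the morphism $\hat\phi(x,v)=(\phi(x),\psi(v)+\gamma(x))$ versus $\hat\phi'(x,v)=(\phi(x),\psi(v)+\gamma'(x))$.

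Next I would write down the candidate isomorphism. Define $\alpha:\hat{L_1}\to\hat{L_1}'$ and $\beta:\hat{L_2}\to\hat{L_2}'$ by $\alpha(x,v)=(x,v+\eta(x))$ and $\beta(a,w)=(a,w+\eta'(a))$. These are linear isomorphisms with inverses $(x,v)\mapsto(x,v-\eta(x))$ and $(a,w)\mapsto(a,w-\eta'(a))$; they restrict to the identity on $V$ and $W$ and descend to the identity on $L_1$ and $L_2$ under the projections, so the extension diagram will automatically commute once $\alpha,\beta$ are shown to be homomorphisms compatible with the vertical morphisms.

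The bookkeeping then splits into two verifications. First, that $\alpha$ preserves both brackets of $\hat{L_1}$: expanding $\alpha\big([(x,v_1),(y,v_2)]_{\hat{L_1}}\big)$ and $[\alpha(x,v_1),\alpha(y,v_2)]_{\hat{L_1}'}$ and comparing $V$-components, equality holds precisely when $\alpha_I-\alpha_I'=\delta_I\eta$; the analogous computation for the ternary bracket reduces to $\alpha_{II}-\alpha_{II}'=\delta_{II}\eta$. The corresponding two relations for $\eta'$ show that $\beta$ is a homomorphism of $\hat{L_2}$. This part is essentially the classical Lie-Yamaguti abelian-extension computation of \cite{ZL}, now read off the explicit brackets constructed in the previous theorem.

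The decisive step is compatibility with the morphisms, $\hat\phi'\circ\alpha=\beta\circ\hat\phi$, since this is where the morphism component of the cocycle enters and is the feature distinguishing morphism cohomology from ordinary Lie-Yamaguti cohomology. Computing both sides on $(x,v)$ gives $\hat\phi'(\alpha(x,v))=(\phi(x),\psi(v)+\psi(\eta(x))+\gamma'(x))$ and $\beta(\hat\phi(x,v))=(\phi(x),\psi(v)+\gamma(x)+\eta'(\phi(x)))$, so the two agree exactly when $\gamma-\gamma'=\psi\circ\eta-\eta'\circ\phi$, which is the third coboundary relation above. Hence $(\alpha,\beta)$ is an isomorphism of morphisms of Lie-Yamaguti algebras fitting into the commuting extension diagram, and the two abelian extensions are isomorphic. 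I expect no genuine obstacle beyond this morphism-compatibility check; the only point requiring care is the correct reading of the degree-zero coboundary, in which the first slot of the morphism part is $\psi\circ\eta$ (and not $\phi\circ\eta$), so that its values indeed lie in $W_\phi$.
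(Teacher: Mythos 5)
Your proposal is correct and follows exactly the route the paper takes: the same maps $\alpha(x,v)=(x,v+\eta(x))$, $\beta(a,w)=(a,w+\eta'(a))$ built from the primitive of the simple coboundary (the paper calls it $(\xi,\xi'),$ and merely asserts the isomorphism). Your write-up in fact supplies the verifications the paper omits — the bracket-preservation checks reducing to $\delta_I\eta,\delta_{II}\eta,\delta_I\eta',\delta_{II}\eta'$, and the morphism-compatibility check $\hat{\phi}'\circ\alpha=\beta\circ\hat{\phi}$ reducing to $\gamma-\gamma'=\psi\circ\eta-\eta'\circ\phi$, where your reading of the morphism slot as $\psi\circ\eta$ (despite the paper's $n=1$ formula writing $\phi\circ\alpha$) is the correct one.
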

\begin{proof}

Let $((\alpha_I,\alpha_{II}),(\beta_I,\beta_II),(\gamma,\gamma))$, $((\alpha_I^{'},\alpha_{II}^{'}),(\beta_I^{'},\beta_II^{'}),(\gamma^{'},\gamma^{'})) \in Z^{(2,3)}_{mLYA}(\phi,\psi)$ such that
\begin{align*}
&((\alpha_I,\alpha_{II}),(\beta_I,\beta_II),(\gamma,\gamma))-((\alpha_I^{'},\alpha_{II}^{'}),(\beta_I^{'},\beta_II^{'}),(\gamma^{'},\gamma^{'}))\\
&=d_{mLYA}((\xi,\xi),(\xi^{'},\xi^{'}),(0,0)) ~\mbox{for some}~\xi \in C^1(L_1,V),~\xi^{'}\in C^2(L_2,W).
\end{align*}
Then the corresponding induced abelian extensions  $\hat{\phi} :\hat{L_1}\rightarrow \hat{L_2}$ and  $\hat{\phi^{'}} :\hat{L_1^{'}}\rightarrow \hat{L_2^{'}}$ respectively are isomorphic via the map $(\alpha,\beta)$ where $\alpha : \hat{L_1} \rightarrow \hat{L_1{'}}$ and $\beta : \hat{L_2} \rightarrow \hat{L_2^{'}}$ defined by $\alpha(x,v)=(x,v+\xi(x))$, $\beta(a,w)=(a,w+\xi^{'}(a))$, $x \in L_1,~a\in L_2,~v \in V,~w \in W$ respectively.

\end{proof}
From the above Lemmas and Theorems, we have the following main result of this section:
\begin{theorem}
Let $\phi: L_1 \rightarrow L_2$ be a morphism of Lie-Yamaguti algebra with a representation $(V,W,\psi)$. Then there exists a one-to-one correspondence between $Ext(\phi,\psi)$ and the $(2,3)$-cohomology group $H^{(2,3)}_{mLYA,s}(\phi,\psi).$
\end{theorem}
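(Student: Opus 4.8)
The plan is to assemble the five preceding results of this section into a pair of mutually inverse maps between $Ext(\phi,\psi)$ and $H^{(2,3)}_{mLYA,s}(\phi,\psi)$. First I would define a map
$$\Theta : Ext(\phi,\psi) \rightarrow H^{(2,3)}_{mLYA,s}(\phi,\psi)$$
sending the isomorphism class of an abelian extension $\hat{\phi}:\hat{L_1}\rightarrow \hat{L_2}$ to the cohomology class of the cocycle $((\alpha_I,\alpha_{II}),(\beta_I,\beta_{II}),(\gamma,\gamma))$ read off from any section $(s,\bar{s})$, as in the first Lemma of this section. This $\Theta$ is well defined: the Lemma asserting independence of the section shows the class does not depend on the chosen $(s,\bar{s})$, and the Theorem on isomorphic extensions shows isomorphic extensions yield the same class, so $\Theta$ descends to isomorphism classes.

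Next I would define the candidate inverse
$$\Xi : H^{(2,3)}_{mLYA,s}(\phi,\psi) \rightarrow Ext(\phi,\psi)$$
sending a class to the isomorphism class of the abelian extension with $\hat{L_1}=L_1\oplus V$, $\hat{L_2}=L_2\oplus W$ built from a representative cocycle as in the Theorem stating that every cocycle induces an abelian extension. Well-definedness of $\Xi$ is exactly the content of the final Lemma: cohomologous cocycles produce isomorphic extensions, so the induced map on cohomology classes is unambiguous.

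It then remains to verify that $\Theta$ and $\Xi$ are mutually inverse. For $\Theta\circ\Xi=\mathrm{id}$, starting from a cocycle I would feed it to $\Xi$ to obtain $\hat{L_1}=L_1\oplus V$ and $\hat{L_2}=L_2\oplus W$, choose the canonical section $s(x)=(x,0)$, $\bar{s}(a)=(a,0)$, and compute the cocycle returned by $\Theta$; using the explicit bracket formulas, the expressions $[s(x),s(y)]_{\hat{L_1}}-s([x,y])$, $\{s(x),s(y),s(z)\}_{\hat{L_1}}-s(\{x,y,z\})$ and $\hat{\phi}(s(x))-\bar{s}(\phi(x))$ collapse to $\alpha_I(x,y)$, $\alpha_{II}(x,y,z)$ and $\gamma(x)$, recovering the original cocycle. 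For $\Xi\circ\Theta=\mathrm{id}$, starting from an extension I would fix a section $(s,\bar{s})$, extract its cocycle, rebuild the extension $L_1\oplus V\rightarrow L_2\oplus W$, and exhibit an explicit isomorphism of morphisms of Lie-Yamaguti algebras to the original $\hat{\phi}$ via $(x,v)\mapsto s(x)+i(v)$ and $(a,w)\mapsto \bar{s}(a)+\bar{i}(w)$; one checks these respect both brackets using the defining relations of $\rho_V,D_V,\theta_V$ (resp. $\rho_W,D_W,\theta_W$) and the cocycle conditions, while the intertwining $\hat{\phi}\circ(x,v)\mapsto s(x)+i(v)$ follows from the definition of $\gamma$ together with $\hat{\phi}\circ i=\bar{i}\circ\psi$.

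I expect the $\Xi\circ\Theta=\mathrm{id}$ direction to be the main obstacle: unlike the purely computational $\Theta\circ\Xi$ direction, here one must produce a genuine isomorphism of extensions and verify its compatibility with the bilinear and trilinear brackets on $\hat{L_1}$ and $\hat{L_2}$ simultaneously, along with the intertwining condition for $\hat{\phi}$. Once both composites are identified with the identity, $\Theta$ and $\Xi$ are mutually inverse bijections, which yields the claimed one-to-one correspondence between $Ext(\phi,\psi)$ and $H^{(2,3)}_{mLYA,s}(\phi,\psi)$.
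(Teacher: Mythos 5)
Your proposal is correct and takes essentially the same route as the paper: the paper proves this theorem in one line (``From the above Lemmas and Theorems\dots''), assembling exactly the five preceding results you invoke to make $\Theta$ and $\Xi$ well defined on isomorphism classes and cohomology classes respectively. Indeed your write-up is slightly more complete than the paper's, since you explicitly check that the two maps are mutually inverse --- via the canonical section $s(x)=(x,0)$, $\bar{s}(a)=(a,0)$ for $\Theta\circ\Xi$ and the isomorphism $(x,v)\mapsto s(x)+i(v)$, $(a,w)\mapsto \bar{s}(a)+\bar{i}(w)$ for $\Xi\circ\Theta$ --- a verification the paper leaves entirely implicit.
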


\section{Discussion about the study of a more general problem}\label{sec7}
Suppose $\bf{LYA}$ is the category of all Lie-Yamaguti algebras over a field $k$. Let $I= \lbrace i, j, k,\ldots \rbrace$ be a partially ordered set. One can think of $I$ as a category whose set of objects is $I$ itself and there is a unique morphism $i \to j$, whenever $i\leq j$. A diagram of Lie-Yamaguti algebras over $I$ is a contravariant functor $\mathfrak{L} : I \to \bf{LYA}$. The diagram is called finite if $I$ is a finite set. In this paper, we have considered a special case of this general form with $I= \lbrace i, j\rbrace$. Therefore, in this paper, our diagram is nothing but a single morphism of Lie-Yamaguti algebras. One can study the cohomology and deformation theory of a diagram of Lie-Yamaguti algebras generalizing our case of single morphism and following \cite{GS83}. 

{\bf Acknowledgements:} The second/corresponding author is supported by the Science and Engineering Research Board (SERB), Department of Science and Technology (DST), Govt. of India (Grant Number- CRG/2022/005332).

\renewcommand{\refname}{REFERENCES}

\end{document}